\newtheorem*{theorem*}{Theorem A}
\newtheorem*{theorem**}{Theorem B}
\newtheorem{theorem}{Theorem}[section]
\newtheorem{corollary}[theorem]{Corollary}
\newtheorem*{corollary*}{Corollary}
\newtheorem{lemma}[theorem]{Lemma}
\newtheorem*{lemma*}{Lemma}
\newtheorem{proposition}[theorem]{Proposition}
\newtheorem*{proposition*}{Proposition}
\theoremstyle{remark}
\newtheorem{remark}[theorem]{Remark}
\newtheorem*{remark*}{Remark}
\newtheorem{example}[theorem]{Example}
\newtheorem*{example*}{Example}
\newtheorem*{observation*}{Observation}
\theoremstyle{definition}
\newtheorem{definition}[theorem]{Definition}
\newtheorem*{definition*}{Definition}
\newtheorem*{conjecture*}{Conjecture}
\numberwithin{equation}{section}
\begin{document}
\title[Gorenstein cohomological dimension and stable categories]{Gorenstein cohomological dimension and stable categories for groups}
\author[Wei Ren] {Wei Ren}

\makeatletter
\@namedef{subjclassname@2020}{\textup{2020} Mathematics Subject Classification}
\makeatother

\thanks{}
\subjclass[2020]{18G20, 18G80, 20C07, 55U35}
\date{}

\thanks{E-mail: wren$\symbol{64}$cqnu.edu.cn}
\keywords{Gorenstein cohomological dimension, cofibrant module, stable category, model category.}%

\maketitle

\dedicatory{}%
\commby{}%

\begin{abstract}
First we study the Gorenstein cohomological dimension ${\rm Gcd}_RG$ of groups $G$ over coefficient rings $R$, under changes of groups and rings; a characterization for finiteness of ${\rm Gcd}_RG$ is given. Some results in literature obtained over the coefficient ring $\mathbb{Z}$ or rings of finite global dimension are generalized to more general cases. Moreover, we establish a model structure on the weakly idempotent complete exact category $\mathcal{F}ib$ consisting of fibrant $RG$-modules, and show that the homotopy category $\mathrm{Ho}(\mathcal{F}ib)$ is triangle equivalent to both the stable category $\underline{\mathcal{C}of}(RG)$ of Benson's cofibrant modules, and the stable module category ${\rm StMod}(RG)$. The relation between cofibrant modules and Gorenstein projective modules is discussed, and we show that under some conditions such that ${\rm Gcd}_RG<\infty$, ${\rm Ho}(\mathcal{F}ib)$ is equivalent to the stable category of Gorenstein projective $RG$-modules, the singularity category, and the homotopy category of totally acyclic complexes of projective $RG$-modules.
\end{abstract}

\section{Introduction}

The origin of Gorenstein projective dimension may date back to the study of $G$-dimension by Auslander and Bridger \cite{AB69} in 1960s. Let $G$ be any group. Over the group ring $RG$, the coefficient ring $R$ is an $RG$-module with trivial group action. The \emph{cohomological dimension} ${\rm cd}_{R}G$, and the \emph{Gorenstein cohomological dimension} ${\rm Gcd}_{R}G$, are defined as the projective dimension and Gorenstein projective dimension of the trivial $RG$-module $R$ respectively. Studying groups through their cohomological dimensions arose from both topological and algebraic sources; see \cite{Bro82}. We refer to Introduction of \cite{ET18} for more details on the geometric significance of the Gorenstein cohomological dimension of groups.

The Gorenstein cohomological dimension of groups was extensively studied under an assumption that the coefficient ring is either the ring of integers $\mathbb{Z}$ (see e.g. \cite{ABS09, BDT09, DT08, DT10, Tal14}), or a commutative ring of finite (weak) global dimension (see e.g. \cite{Bis21+, Bis21, ET14, ET18}). However, as stated in \cite[Remark 1.4(i)]{ET22}, in dealing with Gorenstein cohomological dimension of groups, the assumption that the coefficient ring has finite global dimension is perhaps unnatural. In fact, non-projective Gorenstein projective modules fail to exist over rings of finite global dimension, and it is more reasonable to consider \emph{Gorenstein regular rings} \cite{Bel00, EEGR}, i.e. the rings with finite global Gorenstein projective dimension, in Gorenstein homological theory.

Firstly, we are inspired to consider Gorenstein cohomological dimension of groups over more general rings. Specifically, some important results over $\mathbb{Z}$ or (noetherian) rings of finite global dimension in the literature are generalized; see for example Theorem \ref{thm:fGcd}, Theorem \ref{thm:H2Gequ}, Proposition \ref{prop:gd-bound} and Corollary \ref{cor:Gcd=pdB1}.

The main result of Section 2 is Theorem \ref{thm:fGcd}. We characterize the finiteness of ${\rm Gcd}_RG$ when the coefficient ring $R$ is Gorenstein regular; this generalizes \cite[Theorem 2.7]{BDT09}, \cite[Theorem 6.4]{ET14} and \cite[Theorem 1.7]{ET18}. There are some cases for ${\rm Gcd}_RG <\infty$; see Examples \ref{eg:fGcd}, \ref{eg:H1Fgroup} and Corollary \ref{cor:Gcd=pdB2}. It is worth to note that Corollary \ref{cor:Gcd=pdB2} supports a question raised in \cite[Conjecture 3.4]{Bis21+}; for a Gorenstein regular ring $R$, the upper bound of the Gorenstein projective dimension of all $RG$-modules is determined by ${\rm Gcd}_RG$; see Proposition \ref{prop:gd-bound} and Example \ref{eg:12}.

The ``Gcd'' can be considered as an assignment of invariants for the pairs of groups and coefficient rings $(G, R)$. In Section 3 and 4, we study the properties of Gcd under changes of groups and coefficient rings, respectively. we define an order $(H, S)\leq (G, R)$ for such pairs; see Definition \ref{def:order}. Inspired by \cite{ET18, Tal14}, we use Theorem \ref{thm:fGcd} to show that when $R$ is a commutative Gorenstein regular ring, $\mathrm{Gcd}_{R}H\leq \mathrm{Gcd}_{R}G$ if $(H, R)\leq (G, R)$, and $\mathrm{Gcd}_{S}G\leq \mathrm{Gcd}_{R}G$ if $(G, S) \leq (G, R)$; see Proposition \ref{prop:GroupOrd} and Proposition \ref{prop:RingOrd}. These generalize \cite[Propositions 2.1 and 2.4]{ET18} and \cite[Theorem 3.2]{Tal14}. Consequently, ``Gcd'' preserves the order of pairs of groups and commutative Gorenstein regular rings; see Corollary \ref{cor:KpOrd}.

By Serre's Theorem, ${\rm cd}_\mathbb{Z}G = {\rm cd}_\mathbb{Z}H$ for a torsion-free group $G$ and any subgroup $H$ of finite index; see e.g. \cite[Theorem VIII 3.1]{Bro82}. We have a Gorenstein version of Serre's Theorem for any group and its subgroups of finite index over any coefficient ring; see Theorem \ref{thm:H2Gequ}. Note that the equality ${\rm Gcd}_RG = {\rm Gcd}_RH$ was stated in \cite[Corollary 2.10]{ET18} under assumptions that the coefficient ring $R$ is of finite weak global dimension and $H$ is a normal subgroup of $G$.

Let $B(G, R)$ be an $RG$-module given by functions from $G$ to $R$ which take finitely many values \cite{CK97}. Recall that an $RG$-module $M$ is \emph{cofibrant} if the $RG$-module $M\otimes_{R}B(G, R)$ via diagonal action of $G$ is projective; see \cite[Definition 4.1]{Ben97}. It follows from \cite{CK97} that any Benson's cofibrant module is Gorenstein projective. Inspired by \cite{CK97, DT10}, in Section 5 we further study the relation between cofibrant modules and Gorenstein projective modules over group rings, and compare the Gorenstein projective dimension of the $RG$-module $M$ and the projective dimension of the $RG$-module $M\otimes_R B(G,R)$; see Propositions \ref{prop:GP-Cof1} and \ref{prop:GP-Cof2}. In particular, by letting $M$ be the trivial $RG$-module $R$, we have an equality ${\rm Gcd}_{R}G = {\rm pd}_{RG}B(G, R)$ in two cases; see Corollaries \ref{cor:Gcd=pdB1} and \ref{cor:Gcd=pdB2}. The first one generalizes \cite[Theorem 1.18]{Bis21} from $R$ being of finite global dimension to Gorenstein regular, and the second one, as stated above, supports a question in \cite[Conjecture 3.4]{Bis21+}.

In Section 6, we are devoted to study model structure and stable categories with respect to cofibrant modules. The notion of a model category was introduced by Quillen \cite{Qui67} as an axiomatization of homotopy theory. Let $\mathcal{F}ib$ consist of $RG$-modules $M$ such that ${\rm pd}_{RG}M\otimes_{R}B(G, R) < \infty$, named {\em fibrant modules}. It is clear that projective $RG$-modules and cofibrant modules are fibrant. By Lemma \ref{lem:F}, the category of fibrant modules $\mathcal{F}ib$ is a weakly idempotent complete exact category, i.e. an exact category in which every split monomorphism has a cokernel and every split epimorphism has a kernel. Moreover, we establish a model structure on $\mathcal{F}ib$; see Theorem \ref{thm:model} and compare [6, Section 10].

For model category $\mathcal{F}ib$, the associated homotopy category $\mathrm{Ho}(\mathcal{F}ib)$ is obtained by formally inverting the weak equivalences. It is standard that $\mathrm{Ho}(\mathcal{F}ib)$ is triangle equivalent to the stable category $\underline{\mathcal{C}of}(RG)$; see Corollary \ref{cor:equ1}. Moreover, ${\rm Ho}(\mathcal{F}ib)$ is equivalent to the {\em stable module category} ${\rm StMod}(RG)$; see Theorem \ref{thm:stable}. It is worth to remark that the modules of ${\rm StMod}(RG)$ were assumed to be countably presented in \cite{Ben97}, and Benson suggested that one can lift the countability hypothesis based on \cite[Open Problem 2]{Ben97}. In fact, the objects of ${\rm StMod}(RG)$ are precisely the fibrant $RG$-modules, and the countability assumption of modules in ${\rm StMod}(RG)$ is not necessary.

In \cite[Theorem 3.10]{MS19}, some equivalences of triangulated categories
$${\rm StMod}(RG) \simeq \underline{\mathcal{GP}}(RG) \simeq {\rm D}_{sg}(RG) \simeq {\rm K}_{tac}(RG\text{-}{\rm Proj})$$
are established if $G$ is a group of type $\Phi_R$ and $R$ is assumed to be a commutative noetherian ring with finite global dimension, where $\underline{\mathcal{GP}}(RG)$ is the stable category of Gorenstein projective $RG$-modules, ${\rm D}_{sg}(RG)$ is the singularity category \cite{Buc87, Or04}, and ${\rm K}_{tac}(RG\text{-}{\rm Proj})$ is the homotopy category of totally acyclic complexes of projective $RG$-modules. This can be generalized and extended as a direct application of the above. Let $R$ be a commutative ring with finite global dimension, which is not necessarily noetherian.  If $G$ is either a group of type $\Phi_R$ or an ${\rm LH}\mathfrak{F}$-group of type $FP_\infty$, then ${\rm Gcd}_RG$ is finite, and furthermore, we prove in Corollary \ref{cor:triequ} that
$$\begin{aligned} {\rm Ho}(\mathcal{F}ib) &\simeq  {\rm StMod}(RG) \simeq \underline{\mathcal{C}of}(RG) = \underline{\mathcal{GP}}(RG) \\
 &\simeq {\rm D}_{sg}(RG) \simeq {\rm K}_{tac}(RG\text{-}{\rm Proj}) = {\rm K}_{ac}(RG\text{-}{\rm Proj}),
\end{aligned}$$
where ${\rm K}_{ac}(RG\text{-}{\rm Proj})$ denotes the homotopy category of acyclic complexes of projective $RG$-modules.

\section{Finiteness of Gorenstein cohomological dimension of groups}

Throughout, all rings are assumed to be associative with unit, and all modules will be left modules unless otherwise specified.

Let $A$ be a ring. An acyclic complex of projective $A$-modules
$$\mathbf{P} = \cdots\longrightarrow P_{n+1}\longrightarrow P_{n}\longrightarrow P_{n-1}\longrightarrow\cdots$$
is said to be \emph{totally acyclic}, if it remains acyclic after applying $\mathrm{Hom}_{A}(-, P)$ for any projective $A$-module $P$. A module is \emph{Gorenstein projective} \cite{EJ00} if it is isomorphic to a syzygy of such a totally acyclic complex. For finitely generated Gorenstein projective modules, there are different terminologies in the literature, such as modules of $G$-dimension zero, maximal Cohen-Macaulay modules and totally reflexive modules; see for example \cite{AB69, Buc87, Chr00, EJ00}.

For any module $M$, the \emph{Gorenstein projective dimension} is defined in the standard way by using resolutions of Gorenstein projective modules, i.e.
$\mathrm{Gpd}_AM\leq n$ if and only if there is an exact sequence $0\rightarrow M_{n}\rightarrow M_{n-1} \rightarrow \cdots \rightarrow M_{0}\rightarrow M\rightarrow 0$ with each $M_{i}$ being Gorenstein projective.

For any ring $A$, the {\em (left) Gorenstein global dimension} of $A$ is defined as $${\rm G.gldim}(A)={\rm sup}\{{\rm Gpd}_A(M)\; |\; M\in {\rm Mod}(A)\}.$$
The preference of the terminology is justified by \cite[Theorem 1.1]{BM10} which shows that ${\rm G.gldim}(A)$ equals to the supremum of the Gorenstein injective dimension of all $A$-modules.

According to a classical result established by Serre, and by Auslander and Buchsbaum, a commutative noetherian local ring $A$ is {\em regular} if and only if its global dimension ${\rm gldim}(A)$ if finite.  Analogously, a ring with  ${\rm G.gldim}(A)<\infty$ is said to be {\em (left) Gorenstein regular}. It is also called {\em left Gorenstein} in \cite[Section 6]{Bel00}, which is equivalent to the fact that the category of left modules over the ring is a {\em Gorenstein category} in the sense of \cite[Definition 2.18]{EEGR} or \cite[Section 4]{Bel00}.

For any ring $A$, it is clear that ${\rm G.gldim}(A)\leq {\rm gldim}(A)$. Then the rings of finite global dimension are Gorenstein regular. The converse may not be true in general. For example,  both $\mathbb{Z}_4 = \mathbb{Z}/4\mathbb{Z}$ and the truncated polynomial ring $k[x]/(x^n)$ ($n\geq 2$) over a field $k$, are rings with the Gorenstein global dimension zero, while their global dimensions are infinity.

For any pair $(G, R)$ of a group $G$ and a coefficient ring $R$, the \emph{(left) Gorenstein cohomological dimension} of $G$ over $R$, denoted by $\mathrm{Gcd}_{R}G$, is defined to be the Gorenstein projective dimension of the $RG$-module $R$, where the group action on $R$ is trivial. In this section, we intend to study the finiteness of Gorenstein cohomological dimension of groups. We begin with the following observation.

\begin{lemma}\label{lem:Gp}
Let $R$ be a left Gorenstein regular ring and $G$ a group. Then any Gorenstein projective $RG$-module is also a Gorenstein projective $R$-module.
\end{lemma}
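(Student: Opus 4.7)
The plan is to exploit the fact that a Gorenstein projective $RG$-module $M$ appears, by definition, as a syzygy of a totally acyclic complex
$$\mathbf{P}: \cdots\To P_{n+1}\To P_{n}\To P_{n-1}\To\cdots$$
of projective $RG$-modules, and to show that this same complex, viewed over $R$ via restriction of scalars, is a totally acyclic complex of projective $R$-modules. Two ingredients are immediate: each $P_n$ remains projective as an $R$-module, since $RG$ is free as an $R$-module and so every projective $RG$-module is an $R$-summand of a free $R$-module; and $\mathbf{P}$ is still acyclic when regarded as a complex of $R$-modules, acyclicity being independent of the ring of operators.

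The substantive step is to verify total acyclicity over $R$, i.e.\ that $\mathrm{Hom}_R(\mathbf{P},Q)$ is acyclic for every projective $R$-module $Q$. Writing $K_n = \mathrm{ker}(P_n\To P_{n-1})$, this reduces to proving $\mathrm{Ext}^1_R(K_n,Q)=0$ for every $n$. The short exact sequences $0\To K_n\To P_n\To K_{n-1}\To 0$, together with the $R$-projectivity of $P_n$, give the standard dimension shift
$$\mathrm{Ext}^i_R(K_n,Q)\;\cong\;\mathrm{Ext}^{i+k}_R(K_{n-k},Q)\quad\text{for all } i\geq 1,\; k\geq 0.$$
Here is where the Gorenstein regularity of $R$ enters. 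Setting $n_0={\rm G.gldim}(R)<\infty$, every $R$-module has Gorenstein projective dimension at most $n_0$, and there is a standard vanishing result asserting that $\mathrm{Ext}^j_R(N,Q)=0$ whenever $j>{\rm Gpd}_R N$ and $Q$ is of finite projective dimension. Applying this with $N=K_{n-k}$ and taking $k$ large enough that $i+k>n_0$ kills the right-hand side, forcing $\mathrm{Ext}^i_R(K_n,Q)=0$ for all $i\geq 1$ and all $n$.

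Putting the pieces together, $\mathbf{P}$ is a totally acyclic complex of projective $R$-modules with $M$ as a syzygy, so $M$ is Gorenstein projective over $R$. The main obstacle is precisely the passage from $R$-acyclic to $R$-totally acyclic, which is the content of the dimension-shifting paragraph; the finiteness of ${\rm G.gldim}(R)$ is used in an essential way only to uniformly bound ${\rm Gpd}_R K_{n-k}$ and thereby annihilate $\mathrm{Ext}^{i+k}$ for large $k$. All other steps are formal and amount to keeping track of restriction of scalars along $R\To RG$.
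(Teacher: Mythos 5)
Your argument is correct and follows essentially the same route as the paper: restrict the totally acyclic complex of projective $RG$-modules to $R$, note that projectivity and acyclicity survive restriction, and use Gorenstein regularity of $R$ to upgrade the restricted complex from acyclic to totally acyclic. The only cosmetic difference is in how that last step is justified — the paper dimension-shifts against the finite injective dimension of the projective test module $Q$ (every projective is of finite injective dimension over a Gorenstein regular ring), whereas you dimension-shift against the uniform bound ${\rm G.gldim}(R)$ on ${\rm Gpd}_R$ of the syzygies together with Holm's Ext-vanishing; the two are interchangeable instances of the same mechanism.
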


\begin{proof}
Let $M$ be a Gorenstein projective $RG$-module. Then there is a totally acyclic complex of projective $RG$-modules $\cdots\rightarrow P_{1}\rightarrow P_{0}\rightarrow P_{-1}\rightarrow\cdots$  such that
$M\cong \mathrm{Ker}(P_0\rightarrow P_{-1})$. Since any projective $RG$-module is also $R$-projective, by restricting this totally acyclic complex, we get an acyclic complex of projective $R$-modules. Noting that $R$ is left Gorenstein regular, every projective $R$-module $P$ has finite injective dimension. Apply
${\rm Hom}_R(-, P)$ to the above complex, and it is easy to see that the acyclic complex of projective $R$-modules is totally acyclic by induction on the injective dimension of $P$. Hence, $M$ is also a Gorenstein projective $R$-module.
\end{proof}

\begin{lemma}\label{lem:SplitMonic}
Let $G$ be a group, $R$ be a left Gorenstein regular ring. If ${\rm Gcd}_{R}G$ is finite, then there exists an $R$-split $RG$-exact sequence $0\rightarrow R\rightarrow \Lambda$, where $\Lambda$ is an $R$-projective $RG$-module such that ${\rm Gcd}_RG = {\rm pd}_{RG}\Lambda$.
\end{lemma}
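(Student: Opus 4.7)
Let $n := \mathrm{Gcd}_{R}G < \infty$ and fix a projective $RG$-resolution $\cdots \to P_1 \to P_0 \to R \to 0$ with syzygies $K_i = \Omega^i R$; by definition $K_n$ is Gorenstein projective over $RG$. The first key observation is that every $K_i$ is automatically $R$-projective: since $R$ is itself $R$-projective, the epimorphism $P_0 \twoheadrightarrow R$ admits an $R$-linear section, so $K_1$ is an $R$-direct summand of $P_0$, and iteration gives the same for each $K_i$. In particular, $K_n$ is simultaneously Gorenstein projective over $RG$ and $R$-projective.

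The plan is to use the standard existence theorem for \emph{special Gorenstein projective preenvelopes} (Holm's approximation theory, applicable whenever the module has finite Gorenstein projective dimension) to obtain an exact sequence of $RG$-modules
\begin{equation*}
0 \to R \to \Lambda \to G' \to 0
\end{equation*}
with $\mathrm{pd}_{RG}(\Lambda) \leq n$ and $G'$ Gorenstein projective over $RG$. The $R$-splittedness of this sequence is immediate: by Lemma \ref{lem:Gp}, $G'$ is Gorenstein projective over $R$, and since $R$ is $R$-projective, the defining Ext-vanishing property of Gorenstein projective modules gives $\mathrm{Ext}^1_R(G', R) = 0$; the sequence therefore splits as a sequence of $R$-modules, giving $\Lambda \cong R \oplus G'$ as an $R$-module.

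The principal obstacle is showing that $\Lambda$ is $R$-projective, which reduces to arranging $G'$ to be $R$-projective. I plan to do this by carefully choosing the building blocks of Holm's pushout construction so that they are $R$-projective from the outset. The complete $RG$-resolution of $K_n$ provides an embedding $K_n \hookrightarrow Q$ into a projective $RG$-module $Q$ with cokernel $C$ Gorenstein projective over $RG$; since $K_n$ is $R$-projective by the preliminary observation and $C$ is Gorenstein projective over $R$ by Lemma \ref{lem:Gp}, we have $\mathrm{Ext}^1_R(C, K_n) = 0$, forcing the sequence $0 \to K_n \to Q \to C \to 0$ to split over $R$ and exhibiting $C$ as an $R$-direct summand of the $R$-projective module $Q$, hence $R$-projective. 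Propagating this $R$-projectivity through the pushout that assembles $\Lambda$ from the projective resolution of $R$ and the right half of the complete resolution of $K_n$ then gives $\Lambda$ $R$-projective. Finally, the equality $\mathrm{pd}_{RG}(\Lambda) = n$ combines the upper bound built into the construction with the lower bound from dimension shifting: from $0 \to R \to \Lambda \to G' \to 0$ and the Gorenstein projectivity of $G'$ one deduces $n = \mathrm{Gpd}_{RG}(R) \leq \mathrm{pd}_{RG}(\Lambda)$.
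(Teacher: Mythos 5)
Your proposal follows the same skeleton as the paper's proof: Holm's approximation theorem gives $0\to K\to M\to R\to 0$ with $M$ Gorenstein projective and $\mathrm{pd}_{RG}K=n-1$, a pushout along an embedding $M\hookrightarrow P$ with Gorenstein projective cokernel $L$ yields $0\to R\to \Lambda\to L\to 0$ together with $0\to K\to P\to \Lambda\to 0$, and the $R$-splitness comes, exactly as you say, from Lemma \ref{lem:Gp} plus $\mathrm{Ext}^1_R(L,R)=0$. The one place you genuinely diverge is the $R$-projectivity of $\Lambda$, and there you take the hard road: you propose to open up the construction of the preenvelope and propagate $R$-projectivity of the cosyzygies of $K_n$ through the pushout. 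This can be made to work (each cosyzygy in the right half of a complete resolution of the $R$-projective module $K_n$ is an $R$-direct summand of a projective, by the same Ext-vanishing argument you give for $C$), but it ties you to one particular construction and requires a step-by-step verification — precisely the part you leave as a sketch. The paper sidesteps all of this: since $0\to R\to\Lambda\to L\to 0$ is $R$-split, $\Lambda\cong R\oplus L$ is Gorenstein projective as an $R$-module, and $\mathrm{pd}_R\Lambda\le\mathrm{pd}_{RG}\Lambda\le n<\infty$; because a Gorenstein projective module has projective dimension zero or infinity (\cite[Proposition 10.2.3]{EJ00}), $\Lambda$ is $R$-projective. I would replace your propagation step with this observation, as it applies to whatever $\Lambda$ the construction produces without any control over the building blocks. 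Your lower bound $n=\mathrm{Gpd}_{RG}R\le\mathrm{pd}_{RG}\Lambda$ is correct, though the paper reads the equality $\mathrm{pd}_{RG}\Lambda=\mathrm{pd}_{RG}K+1=n$ directly off the second pushout row.
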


\begin{proof}
Let ${\rm Gcd}_{R}G  = {\rm Gpd}_{RG}R = n$. It follows from \cite[Theorem 2.10]{Hol04} that there exists an exact sequence $0\rightarrow K\rightarrow M\rightarrow R\rightarrow 0$, where $M$ is a Gorenstein projective $RG$-module, and ${\rm pd}_{RG}K = n-1$. For $M$, there is an exact sequence of $RG$-modules
$0\rightarrow M\rightarrow P\rightarrow L\rightarrow 0$, where $L$ is Gorenstein projective and $P$ is projective. We consider the following pushout of $M\rightarrow R$ and $M\rightarrow P$:
$$\xymatrix@C=20pt@R=20pt{ & & 0\ar[d] & 0\ar[d] \\
0 \ar[r] &K \ar@{=}[d] \ar[r] & M \ar[d]\ar[r] &R \ar[d]\ar[r] &0 \\
0 \ar[r] &K \ar[r] & P \ar[r] \ar[d] &\Lambda \ar[r]\ar[d] & 0\\
&  & L \ar[d] \ar@{=}[r] & L\ar[d]\\
&  & 0 & 0
  }$$
From the middle row we infer that ${\rm pd}_{RG}\Lambda = {\rm pd}_{RG}K + 1 = n$. It follows from Lemma \ref{lem:Gp} that $L$ is also a Gorenstein projective $R$-module, and then the sequence
$0\rightarrow R\rightarrow \Lambda\rightarrow L\rightarrow 0$ is $R$-split. Moreover, as an $R$-module, $\Lambda\cong L\oplus R$ is Gorenstein projective. By \cite[Proposition 10.2.3]{EJ00}, which says that projective dimension of any Gorenstein projective module is either zero or infinity, we induce from ${\rm pd}_{R}\Lambda \leq {\rm pd}_{RG}\Lambda = n$ that $\Lambda$ is a projective $R$-module. This completes the proof.
\end{proof}

The above $R$-projective $RG$-module $\Lambda$ is called a {\em characteristic module} for $G$ over $R$; see e.g. \cite{Tal17} or \cite[Definition 1.1]{ET22}. As developed in \cite{BDT09, ET14, ET18, Tal17}, this notion provides a useful tool for characterizing the finiteness of ${\rm Gcd}_RG$.

\begin{lemma}\label{lem:RG-Gp}
Let $G$ be a group, $R$ be a commutative Gorenstein regular ring. If there exists an $R$-split monomorphism of $RG$-modules $\iota: R\rightarrow \Lambda$, where $\Lambda$ is $R$-projective with $\mathrm{pd}_{RG}\Lambda < \infty$, then for any $RG$-module $M$, one has
$$\mathrm{Gpd}_{RG}M \leq \mathrm{pd}_{RG}\Lambda + {\rm Gpd}_RM.$$
\end{lemma}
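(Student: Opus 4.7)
Set $n := \mathrm{pd}_{RG}\Lambda$ and $m := \mathrm{Gpd}_R M$; the latter is finite since $R$ is Gorenstein regular (otherwise the bound is vacuous). The plan is to use the splitting datum $\iota$ to produce an $R$-split $RG$-exact sequence with $M$ on the left and a well-controlled middle term, then combine a Gorenstein $R$-projective resolution of $M$ with a projective $RG$-resolution of $\Lambda$ to obtain a bound on $M\otimes_R\Lambda$, and finally descend this bound to $M$ by an iterated $\mathrm{Ext}$-vanishing argument.

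First I would note that $L := \Lambda/R$ is $R$-projective (the $R$-splitting writes $\Lambda\cong R\oplus L$ as $R$-modules), and tensoring $0\to R\to\Lambda\to L\to 0$ over $R$ with $M$ under the diagonal $G$-action gives an $R$-split $RG$-exact sequence
$$0 \to M \to \Lambda\otimes_R M \to L\otimes_R M \to 0.$$
To bound $\mathrm{Gpd}_{RG}(\Lambda\otimes_R M)$, pick a Gorenstein projective $R$-resolution $0\to G_m\to\cdots\to G_0\to M\to 0$; since $\Lambda$ is $R$-flat, tensoring over $R$ with $\Lambda$ preserves exactness and produces an exact $RG$-sequence of length $m$ augmenting $\Lambda\otimes_R M$. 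For each $G_i$, take a projective $RG$-resolution $0\to P_n\to\cdots\to P_0\to\Lambda\to 0$; because $\Lambda$ is $R$-projective all its syzygies are $R$-projective (each short exact sequence in the resolution is $R$-split), so tensoring over $R$ with $G_i$ remains exact and gives a resolution of $\Lambda\otimes_R G_i$ of length $n$ whose terms $P_j\otimes_R G_i$ are Gorenstein projective over $RG$ by Lemma \ref{lem:Gp+1}. Hence $\mathrm{Gpd}_{RG}(\Lambda\otimes_R G_i)\leq n$, and dimension shifting yields $\mathrm{Gpd}_{RG}(\Lambda\otimes_R M)\leq n+m$.

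The remaining step, which I expect to be the main obstacle, is to transfer this bound from $\Lambda\otimes_R M$ to $M$ itself. For any $RG$-module $X$ with $\mathrm{pd}_{RG} X =: p<\infty$, applying $\mathrm{Hom}_{RG}(-,X)$ to the $R$-split sequence above and using Holm's theorem with the bound from the previous step gives $\mathrm{Ext}^i_{RG}(\Lambda\otimes_R M, X)=0$ for $i>n+m$, hence embeddings $\mathrm{Ext}^i_{RG}(M,X)\hookrightarrow\mathrm{Ext}^{i+1}_{RG}(L\otimes_R M,X)$ in the same range. Because $L$ is $R$-projective, $\mathrm{Gpd}_R(L^{\otimes k}\otimes_R M)\leq m$ for every $k$, so the same bound $\mathrm{Gpd}_{RG}(\Lambda\otimes_R L^{\otimes k}\otimes_R M)\leq n+m$ holds and the argument iterates, producing
$$\mathrm{Ext}^i_{RG}(M,X)\hookrightarrow\mathrm{Ext}^{i+k}_{RG}(L^{\otimes k}\otimes_R M,X)$$
for every $k\geq 0$. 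Choosing $k>p-i$ makes the right-hand side vanish, so $\mathrm{Ext}^i_{RG}(M,X)=0$ for all $i>n+m$ and all $X$ of finite $\mathrm{pd}_{RG}$. Converting this Ext-vanishing into the Gpd bound via Holm's characterization requires an a priori finiteness of $\mathrm{Gpd}_{RG} M$, which I would deduce by dimension-shifting in the same $R$-split sequence (the $(n+m)$-th $RG$-syzygy of $M$ is already Gorenstein projective as an $R$-module, and has the required Ext-vanishing against $RG$-projectives from the previous step), allowing one to identify it as a Gorenstein projective $RG$-module and thereby obtain $\mathrm{Gpd}_{RG} M\leq n+m$.
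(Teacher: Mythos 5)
Your overall strategy --- first bound $\mathrm{Gpd}_{RG}(\Lambda\otimes_RM)$ by $n+m$, then descend to $M$ --- starts out parallel to the paper's, but two steps do not go through as written. The smaller issue is a bookkeeping problem with the $G$-actions: the short exact sequence $0\to M\to\Lambda\otimes_RM\to L\otimes_RM\to0$ forces the \emph{diagonal} action on $\Lambda\otimes_RM$, whereas your resolution $\Lambda\otimes_RG_\bullet$ is built from bare $R$-modules $G_i$, so its terms carry the action only through $\Lambda$ and the augmentation $\Lambda\otimes_RG_0\to\Lambda\otimes_RM$ is not $G$-equivariant for the diagonal action on the target (the untwisting isomorphism between the two actions is available for $RG$-projective left factors, not for a merely $R$-projective $\Lambda$). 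This is repairable: replace $G_\bullet$ by the length-$m$ truncation of an $RG$-projective resolution of $M$, whose terms and $m$-th syzygy are $RG$-modules that are Gorenstein projective over $R$; this is in effect what the paper does by inducting on $m$ and treating only the case $m=0$ directly, where Lemma \ref{lem:Gp+1} yields a genuine length-$n$ resolution of $\Lambda\otimes_RM$ by Gorenstein projective $RG$-modules.

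The genuine gap is the final step. Your iteration over $L^{\otimes k}$ does correctly give $\mathrm{Ext}^i_{RG}(M,X)=0$ for $i>n+m$ and all $X$ of finite projective dimension, and you rightly note that Holm's characterization converts this into a dimension bound only given a priori finiteness of $\mathrm{Gpd}_{RG}M$. But the way you propose to obtain that finiteness --- the $(n+m)$-th syzygy $K$ is Gorenstein projective over $R$ and lies in ${^\perp\mathcal{P}(RG)}$, hence is Gorenstein projective over $RG$ --- is precisely the hard implication and does not follow from what you have established. By Lemma \ref{lem:Holm}, membership in ${^\perp\mathcal{P}(RG)}$ is only half of the criterion: one must also produce an exact coresolution $0\to K\to P^0\to P^{-1}\to\cdots$ by \emph{projective} $RG$-modules with all cocycles in ${^\perp\mathcal{P}(RG)}$. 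The coresolution your method naturally supplies, $0\to K\to\Lambda\otimes_RK\to\Lambda\otimes_RL\otimes_RK\to\cdots$, has terms of finite Gorenstein projective dimension, not projective ones, and upgrading it to a totally acyclic complex is the crux of results of this type (compare the explicit construction with $V^i=D^{\otimes i}\otimes_RB$ in Proposition \ref{prop:Cof-GP}, or the proof of Theorem \ref{thm:H2Gequ}). The paper sidesteps this entirely: in the base case $m=0$ it shows $\mathrm{Gpd}_{RG}(\Lambda\otimes_RM)\leq n$ as you do, but then concludes by exhibiting $M=M\otimes_RR$ as a direct summand of $M\otimes_R\Lambda\cong\Lambda\otimes_RM$ (using the $R$-splitting of $\iota$) and invoking closure of Gorenstein projective dimension under direct summands, so no Ext-vanishing or coresolution is ever needed. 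To keep your Ext-theoretic descent you would have to either construct the totally acyclic complex for $K$ or find another source for the a priori finiteness of $\mathrm{Gpd}_{RG}M$.
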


\begin{proof}
Let $\mathrm{pd}_{RG}\Lambda = n$. It suffices to assume that ${\rm Gpd}_RM = m$ is finite. If $M$ is Gorenstein projective as an $R$-module, i.e. ${\rm Gpd}_RM = 0$, the assertion is exactly \cite[Proposition 2.3]{ET}.

We now assume that $m > 0$ and consider a short exact sequence of $RG$-modules $0\rightarrow K\rightarrow P\rightarrow M\rightarrow 0$, where $P$ is projective. Since $P$ is restricted to be a projective $R$-module, as an $R$-module we have ${\rm Gpd}_{R}K = m-1$. Invoking the induction hypothesis, we may conclude that ${\rm Gpd}_{RG}K \leq n + (m-1)$, and hence ${\rm Gpd}_{RG}M\leq n + m$.
\end{proof}

The following generalizes \cite[Theorem 6.4]{ET14} and \cite[Theorem 1.7]{ET18}, where the coefficient ring is assumed to be a commutative (noetherian) ring with finite global dimension. Also, we recover \cite[Theorem 2.7]{BDT09} by specifying $R = \mathbb{Z}$.

\begin{theorem}\label{thm:fGcd}
Let $G$ be a group, $R$ a commutative Gorenstein regular ring. The following are equivalent:
\begin{enumerate}
\item ${\rm Gcd}_{R}G$ is finite.
\item There exists an $R$-split $RG$-exact sequence $0\rightarrow R\rightarrow \Lambda$, where $\Lambda$ is an $R$-projective $RG$-module of finite projective dimension.
\item Any $RG$-module has finite Gorenstein projective dimension.
\item $RG$ is a Gorenstein regular ring.
\end{enumerate}
\end{theorem}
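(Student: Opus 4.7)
The plan is to prove the equivalences by a cyclic chain $(1)\Rightarrow(2)\Rightarrow(4)\Rightarrow(3)\Rightarrow(1)$, which dovetails nicely with the lemmas already established in this section.

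First, $(1)\Rightarrow(2)$ is immediate from Lemma \ref{lem:SplitMonic}: the characteristic module $\Lambda$ produced there is $R$-projective, the map $R\to\Lambda$ is an $R$-split $RG$-monomorphism, and $\mathrm{pd}_{RG}\Lambda={\rm Gcd}_RG<\infty$. The trivial implication $(3)\Rightarrow(1)$ follows by taking $M=R$ as an $RG$-module, since $\mathrm{Gcd}_RG=\mathrm{Gpd}_{RG}R$ by definition. The implication $(4)\Rightarrow(3)$ is just the definition of Gorenstein regularity, since it forces $\mathrm{Gpd}_{RG}M\leq {\rm G.gldim}(RG)<\infty$ for every $RG$-module $M$.

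The key step is $(2)\Rightarrow(4)$. Given the $R$-split monomorphism $\iota\colon R\to\Lambda$ with $\Lambda$ an $R$-projective $RG$-module of finite projective dimension, I invoke Lemma \ref{lem:RG-Gp} to conclude that for every $RG$-module $M$,
\[
\mathrm{Gpd}_{RG}M\leq \mathrm{pd}_{RG}\Lambda+\mathrm{Gpd}_RM.
\]
Since $R$ is assumed commutative Gorenstein regular, we have $\mathrm{Gpd}_RM\leq {\rm G.gldim}(R)<\infty$ uniformly in $M$. Combining these bounds gives
\[
{\rm G.gldim}(RG)\leq \mathrm{pd}_{RG}\Lambda+{\rm G.gldim}(R)<\infty,
\]
so $RG$ is Gorenstein regular, establishing $(4)$.

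I expect no serious obstacle: all heavy lifting has been done in Lemmas \ref{lem:SplitMonic} and \ref{lem:RG-Gp}, and the Gorenstein regularity of $R$ provides precisely the uniform bound on $\mathrm{Gpd}_R(-)$ needed to upgrade the pointwise finiteness in $(3)$ to the uniform finiteness in $(4)$. The only mild subtlety is making sure to note that this argument actually yields an explicit upper bound ${\rm G.gldim}(RG)\leq {\rm Gcd}_RG+{\rm G.gldim}(R)$, which is useful in later applications (e.g.\ Proposition \ref{prop:gd-bound}); but for the statement as written, the qualitative finiteness suffices.
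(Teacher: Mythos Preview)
Your proof is correct and takes essentially the same approach as the paper, relying on Lemmas \ref{lem:SplitMonic} and \ref{lem:RG-Gp} for the substantive implications and treating the remaining steps as trivial. The only cosmetic difference is the direction of the cycle: the paper runs $(1)\Rightarrow(2)\Rightarrow(3)\Rightarrow(4)\Rightarrow(1)$, whereas you run $(1)\Rightarrow(2)\Rightarrow(4)\Rightarrow(3)\Rightarrow(1)$, which has the slight expository advantage that the uniform bound ${\rm G.gldim}(RG)\leq\mathrm{pd}_{RG}\Lambda+{\rm G.gldim}(R)$ is made explicit rather than leaving the passage from pointwise to uniform finiteness in $(3)\Rightarrow(4)$ as ``obvious.''
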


\begin{proof}
The implication (1)$\Rightarrow$(2) follows from Lemma \ref{lem:SplitMonic}, and the implication (2)$\Rightarrow$(3) follows from Lemma \ref{lem:RG-Gp}. The implications (3)$\Rightarrow$(4) and (4)$\Rightarrow$(1) are obvious.
\end{proof}

There are many groups which have finite Gorenstein cohomological dimension; see Corollary \ref{cor:Gcd=pdB2} and the following examples.

\begin{example}\label{eg:fGcd}
It is well known that for any finite group $G$ and any coefficient ring $R$, ${\rm Gcd}_{R}G = 0$; see \cite[Proposition 2.19]{ABS09} for $R = \mathbb{Z}$, \cite[Corollary 2.3]{ET18} for any commutative ring $R$, and Proposition \ref{prop:fgroup} below for any associative ring $R$.

It follows from \cite{Sta68, Swan69} that $G$ is a non-trivial free group if and only if ${\rm cd}_{\mathbb{Z}}G = 1$. In this case, ${\rm Gcd}_{\mathbb{Z}}G = 1$ and $G$ acts on a tree with finite vertex stabilizers; see \cite[Theorem 3.6]{BDT09}.
\end{example}

\begin{example}\label{eg:H1Fgroup}
Let $G$ be the group which admits a finite dimensional contractible $G$-CW-complex with finite stabilizers. It follows from \cite[Lemma 2.21]{ABS09} that ${\rm Gcd}_\mathbb{Z}G$ is finite. Moreover, by \cite[Proposition 2.1]{ET18} (see also Corollary \ref{cor:GcdZG}) we have ${\rm Gcd}_RG \leq {\rm Gcd}_\mathbb{Z}G < \infty$ for any commutative ring $R$. Groups of finite virtual cohomological dimension, including polycyclic-by-finite groups and arithmetic groups, admit a finite dimensional contractible $G$-CW-complex with finite stabilizers; see \cite{CK96}.
\end{example}

\section{Gorenstein cohomological dimension under changes of groups}

In this section, we intend to compare the Gorenstein cohomological dimensions of different groups.

Let $\mathcal{S}$ be a collection of pairs $(G, R)$ of groups and coefficient rings. We define a partial order on $\mathcal{S}$ as follows. One might consider ``Gcd'' as an assignment of invariants for such pairs.

\begin{definition}\label{def:order}
Let $G$ and $H$ be groups, $R$ and $S$ be coefficient rings. For the pairs of groups and rings, we have $(H, S)\leq (G, R)$ provided that $H$ is a subgroup of $G$ and $S$ is an extension ring of $R$; symbolically, $H\leq G$ and $S\geq R$.
\end{definition}

The order seems to be better understood if we formally consider the pairs $(G, R)$ as fractions, where $G$ plays as a ``numerator'' and $R$ as a ``denominator''.

If the coefficient ring is a given commutative Gorenstein regular ring, by applying Theorem \ref{thm:fGcd} we get the following, which implies that the assignment ``Gcd'' preserves the order of pairs in this case. Note that this generalizes the result stated in \cite[Proposition 2.4]{ET18} where the coefficient ring is of finite weak global dimension; an analogous proof is included below for the sake of completeness.

\begin{proposition}\label{prop:GroupOrd}
Let $R$ be a commutative Gorenstein regular ring, $G$ be a group. For any subgroup $H$ of $G$ $($i.e. $(H, R)\leq (G, R))$, we have $\mathrm{Gcd}_{R}H\leq \mathrm{Gcd}_{R}G$.
\end{proposition}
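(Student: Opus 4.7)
The plan is to reduce the claim to a direct application of Lemma \ref{lem:SplitMonic} and Lemma \ref{lem:RG-Gp}, with only one additional elementary ingredient: restriction of scalars along $RH \hookrightarrow RG$ preserves projective dimension (non-strictly).

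First I would dispose of the trivial case: if $\mathrm{Gcd}_R G = \infty$ there is nothing to prove, so assume $n := \mathrm{Gcd}_R G < \infty$. By Lemma \ref{lem:SplitMonic} applied to the pair $(G,R)$, there is an $R$-split short exact sequence of $RG$-modules
$$0\longrightarrow R\longrightarrow \Lambda\longrightarrow L\longrightarrow 0,$$
in which $\Lambda$ is $R$-projective and $\mathrm{pd}_{RG}\Lambda = n$. Viewed through restriction along $RH\hookrightarrow RG$, the same sequence is an $R$-split sequence of $RH$-modules, and $\Lambda$ remains $R$-projective (the $R$-module structure is unchanged).

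Next I would verify that $\mathrm{pd}_{RH}\Lambda\leq n$. The key fact here is that $RG$ is free as a right $RH$-module (with basis given by a set of left coset representatives of $H$ in $G$), so every projective $RG$-module restricts to a projective $RH$-module. Applying restriction to any projective $RG$-resolution of $\Lambda$ of length $n$ yields a projective $RH$-resolution of $\Lambda$ of length $n$, whence $\mathrm{pd}_{RH}\Lambda \leq \mathrm{pd}_{RG}\Lambda = n$. This is the only step involving anything beyond citing the earlier lemmas, and it is essentially standard; I expect no real obstacle here.

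Finally I would feed the data $(H,R)$ together with the $R$-split monomorphism $R\hookrightarrow \Lambda$ of $RH$-modules into Lemma \ref{lem:RG-Gp}, which requires precisely the commutative Gorenstein regular hypothesis on $R$ and finite $\mathrm{pd}_{RH}\Lambda$, both of which we have. Taking $M = R$ as trivial $RH$-module, and noting that $\mathrm{Gpd}_R R = 0$ since $R$ is free over itself, Lemma \ref{lem:RG-Gp} yields
$$\mathrm{Gcd}_R H \;=\; \mathrm{Gpd}_{RH} R \;\leq\; \mathrm{pd}_{RH}\Lambda + \mathrm{Gpd}_R R \;\leq\; n \;=\; \mathrm{Gcd}_R G,$$
which completes the argument. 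The overall strategy is therefore: extract a characteristic module for $G$ using Lemma \ref{lem:SplitMonic}, observe that it restricts to a characteristic module for $H$, and then invoke Lemma \ref{lem:RG-Gp} to bound $\mathrm{Gcd}_R H$ by its projective dimension over $RH$.
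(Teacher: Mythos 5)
Your argument is correct and coincides with the paper's own proof: both extract the characteristic module $\Lambda$ via Lemma \ref{lem:SplitMonic}, observe that restriction along $RH\hookrightarrow RG$ preserves the $R$-split monomorphism, $R$-projectivity, and projective dimension (since $RG$ is free over $RH$), and then apply Lemma \ref{lem:RG-Gp} with $M=R$. Your explicit remark that $\mathrm{Gpd}_R R = 0$ is a small clarification the paper leaves implicit, but the route is the same.
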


\begin{proof}
The inequality is obvious if $\mathrm{Gcd}_{R}G$ is infinity, so it suffices to assume that $\mathrm{Gcd}_{R}G = n$ is finite. In this case, it follows from Lemma \ref{lem:SplitMonic} that there exists an $R$-split monomorphism of $RG$-modules $\iota: R\rightarrow \Lambda$, where $\Lambda$ is $R$-projective and $\mathrm{pd}_{RG}\Lambda = n$.  For any subgroup $H$ of $G$, there is an extension of group rings $RH\rightarrow RG$, which makes every $RG$-module to be an $RH$-module. Then, $\iota: R\rightarrow \Lambda$ can also be considered as an $R$-split monomorphism of $RH$-modules. Moreover, every projective $RG$-module is also projective as an $RH$-module, then $\mathrm{pd}_{RH}\Lambda \leq \mathrm{pd}_{RG}\Lambda = n$. For the trivial $RH$-module $R$, we infer from Lemma \ref{lem:RG-Gp} that
${\rm Gcd}_{R}H = \mathrm{Gpd}_{RH}R \leq \mathrm{pd}_{RH}\Lambda \leq n$. This completes the proof.
\end{proof}

By Serre's theorem, if $G$ is a torsion-free group and $H$ is a subgroup of finite index, then $\mathrm{cd}_{\mathbb{Z}}H = \mathrm{cd}_{\mathbb{Z}}G$; see e.g. \cite[Theorem 9.2]{Swan69} or \cite[Theorem VIII 3.1]{Bro82}. The following result might be regarded as a Gorenstein version of Serre's theorem. We remark that the equality for the coefficient ring $\mathbb{Z}$ holds by \cite[Proposition 2.18]{ABS09}, and the result was also proved in \cite[Corollary 2.10]{ET18} under the assumptions that the coefficient ring $R$ is of finite weak global dimension and $H$ is a normal subgroup of $G$.

\begin{theorem}\label{thm:H2Gequ}
Let $G$ be a group, $R$ be a ring of coefficients. For any subgroup $H$ of $G$ with finite index, there is an equality $\mathrm{Gcd}_{R}H = \mathrm{Gcd}_{R}G$.
\end{theorem}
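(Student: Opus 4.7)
My plan is to prove the two inequalities $\mathrm{Gcd}_R H \le \mathrm{Gcd}_R G$ and $\mathrm{Gcd}_R G \le \mathrm{Gcd}_R H$ separately. The key structural input is that $[G:H]<\infty$ makes $RG$ a finitely generated free (left and right) $RH$-module, so that both restriction and induction preserve projective modules; moreover, the induction functor $\mathrm{Ind}_H^G = RG \otimes_{RH} -$ is naturally isomorphic to the coinduction functor $\mathrm{Coind}_H^G = \mathrm{Hom}_{RH}(RG,-)$.

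For the inequality $\mathrm{Gcd}_R H \le \mathrm{Gcd}_R G$, I show that restriction sends Gorenstein projective $RG$-modules to Gorenstein projective $RH$-modules. Given a totally acyclic complex $\mathbf{P}$ of projective $RG$-modules, the restricted complex is an acyclic complex of projective $RH$-modules, and total acyclicity over $RH$ follows from the natural isomorphism $\mathrm{Hom}_{RH}(\mathbf{P}, Q) \cong \mathrm{Hom}_{RG}(\mathbf{P}, \mathrm{Coind}_H^G Q)$ for any projective $RH$-module $Q$, together with the fact that $\mathrm{Coind}_H^G Q \cong \mathrm{Ind}_H^G Q$ is projective over $RG$. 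Applying this to an $RG$-Gorenstein projective resolution of $R$ of length $\mathrm{Gcd}_R G$ produces an $RH$-Gorenstein projective resolution of $R$ of the same length.

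For $\mathrm{Gcd}_R G \le \mathrm{Gcd}_R H$, assume $\mathrm{Gcd}_R H = n<\infty$. By a dual argument, $\mathrm{Ind}_H^G$ sends Gorenstein projective $RH$-modules to Gorenstein projective $RG$-modules (for any projective $RG$-module $P$, the adjunction $\mathrm{Hom}_{RG}(\mathrm{Ind}_H^G \mathbf{Q}, P) \cong \mathrm{Hom}_{RH}(\mathbf{Q}, \mathrm{Res}_H^G P)$ transports total acyclicity in the opposite direction). Applying this to an $RH$-Gorenstein projective resolution of $R$ of length $n$ gives $\mathrm{Gpd}_{RG}(R[G/H]) \le n$, where $R[G/H] = \mathrm{Ind}_H^G R$ is the permutation module. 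Then, taking any $RG$-projective resolution of $R$ and viewing it as an $RH$-projective resolution, the $n$-th $RG$-syzygy $N := \Omega^n_{RG}(R)$ agrees with the $n$-th $RH$-syzygy and is therefore Gorenstein projective over $RH$; it remains to upgrade this to Gorenstein projectivity over $RG$.

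The main obstacle is precisely this last upgrade: an $RG$-module that is Gorenstein projective over $RH$ need not, a priori, be Gorenstein projective over $RG$, since $N$ is not an $RG$-direct summand of $\mathrm{Ind}_H^G(N|_H)$ in general (the canonical section of the counit gives multiplication by $[G:H]$, which need not be invertible in $R$). The argument must exploit $N$'s role as an $RG$-syzygy of $R$ together with the already-established bound $\mathrm{Gpd}_{RG}(R[G/H]) \le n$; combining this with the Ext-vanishing criterion for Gorenstein projective dimension, Shapiro's isomorphism $\mathrm{Ext}^i_{RG}(R, Q\otimes_R R[G/H]) \cong \mathrm{Ext}^i_{RH}(R, Q|_H)$ for projective $RG$-modules $Q$, and the short exact sequence $0 \to R \to R[G/H] \to R[G/H]/R \to 0$ coming from the diagonal inclusion, one extracts the desired bound $\mathrm{Gpd}_{RG}(R) \le n$.
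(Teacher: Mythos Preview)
Your treatment of $\mathrm{Gcd}_R H \le \mathrm{Gcd}_R G$ is the paper's argument, and for the reverse inequality you set things up the same way: reduce to showing that $N := \Omega^n_{RG}(R)$, which is Gorenstein projective over $RH$, is Gorenstein projective over $RG$, and you correctly flag this upgrade as the crux. The paper also establishes (and your Shapiro remarks implicitly contain) that $N \in {^\perp}\mathcal{P}(RG)$: any projective $RG$-module $P$ is an $RG$-summand of $\mathrm{Ind}_H^G\mathrm{Res}_H^G P$ since the counit is a surjection between projectives and hence splits, so $\mathrm{Ext}^i_{RG}(N,P)$ embeds in $\mathrm{Ext}^i_{RG}(N,\mathrm{Ind}_H^G\mathrm{Res}_H^G P)\cong \mathrm{Ext}^i_{RH}(N,P)=0$.

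The gap is your final paragraph, which does not complete the upgrade. The listed ingredients do not assemble into a proof: (i) the Ext-vanishing characterization of $\mathrm{Gpd}$ (e.g.\ \cite[Theorem~2.20]{Hol04}) presupposes that $\mathrm{Gpd}_{RG}(R)$ is already finite, so it cannot be invoked to establish finiteness---membership of $N$ in ${^\perp}\mathcal{P}(RG)$ alone does not force Gorenstein projectivity; (ii) the diagonal module $Q\otimes_R R[G/H]$ in your Shapiro isomorphism requires $R$ commutative, which the theorem does not assume; (iii) from $0 \to R \to R[G/H] \to C \to 0$ together with $\mathrm{Gpd}_{RG}(R[G/H]) \le n$ there is no way to bound $\mathrm{Gpd}_{RG}(R)$ without control of $C$. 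What is actually needed is a $\mathrm{Hom}_{RG}(-,\mathcal{P})$-exact coresolution $0 \to N \to P_0 \to P_{-1} \to \cdots$ by projective $RG$-modules with all cosyzygies in ${^\perp}\mathcal{P}(RG)$, and the paper builds it by hand: compose the $RH$-split $RG$-monomorphism $\alpha\colon N \hookrightarrow \mathrm{Ind}_H^G N$ with an embedding $f\colon \mathrm{Ind}_H^G N \hookrightarrow P_0$ into a projective (available since $\mathrm{Ind}_H^G N$ is Gorenstein projective over $RG$, as you observed); a diagram chase using the $RH$-retraction of $\alpha$ shows that $L_0 := \mathrm{Coker}(f\alpha)$ satisfies $\mathrm{Ext}^1_{RH}(L_0,Q)=0$ for every projective $RH$-module $Q$, whence $L_0$ is Gorenstein projective over $RH$ by \cite[Corollary~2.11]{Hol04}; then $L_0$ is in the same position as $N$, one iterates, and \cite[Proposition~2.3]{Hol04} finishes.
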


\begin{proof}
By a standard argument, we infer that any Gorenstein projective $RG$-module $M$ can be restricted to be a Gorenstein projective $RH$-module. Let $\mathbf{P} = \cdots\rightarrow P_1\rightarrow P_0\rightarrow P_{-1}\rightarrow\cdots$ be a totally acyclic complex of projective $RG$-modules such that $M \cong \mathrm{Ker}(P_0\rightarrow P_{-1})$. Then, $\mathbf{P}$ is still an acyclic complex of projective $RH$-modules. Since the index $|G:H|$ is finite, there is an equivalence of functors $\mathrm{Ind}_H^G \simeq \mathrm{Coind}_H^G$; see for example \cite[Proposition III 5.9]{Bro82}. For any projective $RH$-module $Q$, there is an isomorphism ${\rm Hom}_{RH}(\mathbf{P}, Q)\cong {\rm Hom}_{RG}(\mathbf{P}, \mathrm{Ind}_H^GQ)$, and then we infer that $\mathbf{P}$ remains acyclic after applying ${\rm Hom}_{RH}(-, Q)$. Consequently, $M$ is restricted to be a Gorenstein projective $RH$-module. This fact leads to the inequality ${\rm Gpd}_{RH}N\leq {\rm Gpd}_{RG}N$ for any $RG$-module $N$. In particular, for the trivial $RG$-module $R$, we have $\mathrm{Gcd}_{R}H \leq \mathrm{Gcd}_{R}G$.

It remains to prove ${\rm Gcd}_{R}G\leq {\rm Gcd}_{R}H$. Since the inequality obviously holds if ${\rm Gcd}_{R}H = \infty$, it suffices to  assume ${\rm Gcd}_{R}H = n$ is finite. Take an exact sequence $0\rightarrow K\rightarrow P_{n-1}\rightarrow \cdots\rightarrow P_0\rightarrow R\rightarrow 0$ of $RG$-modules with each $P_i$ projective. Since $P_i$ are restricted to be projective $RH$-modules, by ${\rm Gcd}_{R}H = n$ we infer that $K$ is a Gorenstein projective $RH$-module. For the required inequality, it suffices to show that $K$ is a Gorenstein projective $RG$-module.

Let $P$ be any projective $RG$-module, which is also restricted to be a projective $RH$-module. Since $P$ is a direct summand of ${\rm Ind}_H^GP$, for any $i>0$, we infer from the isomorphism
$${\rm Ext}^i_{RG}(K, {\rm Ind}_H^GP)\cong {\rm Ext}^i_{RH}(K, P) = 0$$
that ${\rm Ext}^i_{RG}(K, P) = 0$.

Let $\alpha: K\rightarrow {\rm Ind}_H^GK$ be the composition of the $RG$-map $K\rightarrow {\rm Coind}_H^GK = {\rm Hom}_{RH}(RG, K)$ given by $k\rightarrow (rg\rightarrow rgk)$, followed by the isomorphism ${\rm Coind}_H^GK\rightarrow {\rm Ind}_H^GK$. Then $\alpha$ is an $RG$-monic and is split as an $RH$-map. Let $\beta:{\rm Ind}_H^GK\rightarrow K $ be the $RH$-map such that $\beta\alpha = {\rm id}_K$.
It follows from \cite[Lemma 2.6]{ET18} that for the Gorenstein projective $RH$-module $K$, the $RG$-module ${\rm Ind}_H^GK\cong {\rm Coind}_H^GK$ is Gorenstein projective. Then, there is an exact sequence of $RG$-modules $0\rightarrow{\rm Ind}_H^GK \stackrel{f}\rightarrow P_0\rightarrow L\rightarrow 0$, where $P_0$ is projective and $L$ is Gorenstein projective. Hence, we obtain an exact sequence of $RG$-modules $0\rightarrow K \stackrel{d}\rightarrow P_0\rightarrow L_0\rightarrow 0$, where $d = f\alpha$ and $L_0 = {\rm Coker}d$.

Let $Q$ be any projective $RH$-module, and $\gamma: K\rightarrow Q$ be any $RH$-map. Consider the following diagram
$$\begin{xymatrix}@C=20pt{
0 \ar[r] &K \ar[dd]_{\gamma} \ar[rd]^{\alpha} \ar[rr]^{d} & &P_0 \ar@{-->}@/^2pc/[lldd]^{\exists \delta} \ar[r] & L_0 \ar[r] &0\\
& & {\rm Ind}_H^G K \ar[ru]^{f}  \ar[ld]_{\gamma\beta}\\
&Q
}\end{xymatrix}$$
We have ${\rm Ext}^1_{RH}(L, Q)=0$ since $L = {\rm Coker}f$ is a Gorenstein projective $RH$-module, and then for the $RH$-map $\gamma\beta$, there exists a map $\delta:P_0\rightarrow Q$ such that $\gamma\beta = \delta f$. Moreover, $\delta d = \delta f\alpha = \gamma\beta\alpha = \gamma$. This implies that $d^*: \mathrm{Hom}_{RH}(P_{0},Q)\rightarrow\mathrm{Hom}_{RH}(K, Q)$ is epic. Hence, we infer from the exact sequence $$\mathrm{Hom}_{RH}(P_{0},Q)\longrightarrow\mathrm{Hom}_{RH}(K, Q)\longrightarrow\mathrm{Ext}_{RH}^{1}(L_{0},Q)\longrightarrow 0$$
that $\mathrm{Ext}_{RH}^{1}(L_{0},Q)=0$. Consider $0\rightarrow K \stackrel{d}\rightarrow P_0\rightarrow L_0\rightarrow 0$ as an exact sequence of $RH$-modules, where $K$ is Gorenstein projective and $P_{0}$ is projective. By \cite[Corollary 2.11]{Hol04}, $L_{0}$ is a Gorenstein projective $RH$-module. Hence, for any projective $RG$-module $P$, we infer from ${\rm Ext}^1_{RG}(L_0, {\rm Ind}_H^GP)\cong {\rm Ext}_{RH}^1(L_0, P) = 0$ that ${\rm Ext}^1_{RG}(L_0, P) = 0$. This implies that the sequence $0\rightarrow K\rightarrow P_0\rightarrow L_0\rightarrow 0$ remains exact after applying ${\rm Hom}_{RG}(-, P)$.

Then, repeat the above argument for $L_0$, we will obtain inductively an acyclic complex
$0\rightarrow K\rightarrow P_0\rightarrow P_{-1}\rightarrow P_{-2}\rightarrow \cdots$
with each $P_i$ a projective $RG$-module, which remains acyclic after applying ${\rm Hom}_{RG}(-, P)$ for any projective $RG$-module $P$. Recall that ${\rm Ext}^i_{RG}(K, P) = 0$. Consequently, we infer from \cite[Proposition 2.3]{Hol04} that $K$ is a Gorenstein projective $RG$-module, as required.
\end{proof}

We can give a homological characterization for finite groups as follows, which generalizes \cite[Proposition 2.19]{ABS09}, the main theorem of \cite{DT08}, and \cite[Corollary 2.3]{ET18}. For a ring $A$, $\mathrm{spli}(A)$ denotes the supremum of the projective lengths (dimensions) of injective $A$-modules, and $\mathrm{silp}(A)$ denotes the supremum of the injective lengths (dimensions) of projective $A$-modules; these two invariants were introduced by Gedrich and Gruenberg \cite{GG87} in connection with the existence of complete cohomological functors in the category of $A$-modules. The finitistic dimension of $A$, denoted by $\mathrm{fin.dim}(A)$, is defined as the supremum of the projective dimensions of those modules that have finite projective dimension.

\begin{proposition}\label{prop:fgroup}
Let $G$ be a group. Then the following are equivalent:
\begin{enumerate}
\item $G$ is a finite group.
\item For any coefficient ring $R$, ${\rm Gcd}_RG = 0$.
\item ${\rm Gcd}_{\mathbb{Z}}G = 0$.
\item For any left Gorenstein regular ring $R$, we have
$${\rm spli}(RG) = {\rm silp}(RG) = {\rm silp}(R) = {\rm spli}(R) = {\rm fin.dim}(R).$$
\item For any left hereditary ring $R$, ${\rm spli}(RG) = {\rm silp}(RG) = 1$.
\item ${\rm spli}(\mathbb{Z}G) = {\rm silp}(\mathbb{Z}G) = 1$.
\end{enumerate}
\end{proposition}

\begin{proof}

(1)$\Longrightarrow$(2) We consider the subgroup $H = \{1\}$ of $G$. Then ${\rm Gcd}_{R}G = {\rm Gcd}_{R}H = 0$ follows immediately from Theorem \ref{thm:H2Gequ}.

(2)$\Longrightarrow$(3) is trivial, and (1)$\Longleftrightarrow$(3) follows from \cite[Corollary 2.3]{ET18}.

(1)$\Longrightarrow$(4)  The equality ${\rm G.gldim}(RG) = {\rm G.gldim}(R)$ might be well-known; see e.g. \cite[Example 3.8]{CR21}.  Moreover, for left Gorenstein regular rings $R$ and $RG$, the desired equalities hold immediately by \cite[Theorem 4.1]{Emm12}.

(4)$\Longrightarrow$(5) and (5)$\Longrightarrow$(6) are trivial, and (6)$\Longrightarrow$(1) follows by \cite[Theorem 3]{DT08}.
\end{proof}

Let $G$ be a group, $H$ be a finite subgroup of $G$, and denote by $N_{G}(H)$ the normalizer of $H$ in $G$. The Weyl group of $H$ is $W = N_{G}(H)/H$. Note that any module over the group $G$ on which the subgroup $H$ acts trivially induces a module over the Weyl group $W$.

We conclude this section by stating a result similar to \cite[Proposition 2.5]{ET18}. Due to Lemma \ref{lem:SplitMonic}, we can replace the rings of finite weak global dimension therein, to be Gorenstein regular rings. The argument follows verbatim from that of \cite{ET18}, so it is omitted. Note that we reobtain \cite[Theorem 2.8 (3)]{BDT09} by specifying $R$ to be the ring of integers $\mathbb{Z}$.

\begin{proposition}\label{prop:Weyl}
Let $R$ be a commutative Gorenstein regular ring, $G$ be a group. For any finite subgroup $H$ of $G$, we have ${\rm Gcd}_{R}(N_{G}(H)/H)\leq {\rm Gcd}_{R}G$.
\end{proposition}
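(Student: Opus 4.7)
The plan is to adopt the proof of \cite[Proposition 2.5]{ET18} essentially verbatim, with the sole modification that Lemma \ref{lem:SplitMonic} supplies the characteristic module for $G$ in place of the construction based on finite weak global dimension used there. If $\mathrm{Gcd}_R G = \infty$ the inequality is trivial, so I set $n := \mathrm{Gcd}_R G < \infty$; Lemma \ref{lem:SplitMonic} then yields an $R$-split $RG$-monomorphism $\iota \colon R \to \Lambda$ with $\Lambda$ an $R$-projective $RG$-module and $\mathrm{pd}_{RG}\Lambda = n$. Since $RG$ is free as a left $RN_G(H)$-module (choose left coset representatives), we also have $\mathrm{pd}_{RN_G(H)}\Lambda \leq n$, and in particular $\mathrm{pd}_{RH}\Lambda \leq n$.

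The heart of the argument is to promote $\Lambda$ to a characteristic module $\Lambda'$ for the Weyl group $W = N_G(H)/H$ over $R$, namely an $R$-projective $RW$-module together with an $R$-split $RW$-monomorphism $R \to \Lambda'$ satisfying $\mathrm{pd}_{RW}\Lambda' \leq n$. The natural candidate is $\Lambda' := \Lambda^H$: the group $N_G(H)$ preserves $\Lambda^H$ and $H$ acts trivially there, so $\Lambda^H$ carries a canonical $RW$-module structure, and the $R$-linear splitting $p \colon \Lambda \to R$ of $\iota$ restricts to an $R$-splitting of $\iota^H \colon R \to \Lambda^H$. To bound $\mathrm{pd}_{RW}\Lambda^H$, I would invoke two facts: (i) finiteness of $H$ combined with $\mathrm{pd}_{RH}\Lambda \leq n$ forces $\Lambda$ and every syzygy in an $RG$-projective resolution of $\Lambda$ to be $H$-cohomologically trivial, so that $(-)^H$ preserves the exactness of a length-$n$ projective $RG$-resolution $P_\bullet \to \Lambda$; (ii) for every projective $RG$-module $P$, the fixed submodule $P^H$ is $RW$-projective, because $(RG)^H \cong R[H\backslash G]$ is a free $RW$-module (the action of $W$ on $H\backslash G$ being free, with orbit structure giving the free generators). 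Together (i) and (ii) yield a length-$n$ projective $RW$-resolution of $\Lambda^H$.

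Once $\Lambda'$ is in place, Lemma \ref{lem:RG-Gp} applied to the trivial $RW$-module $R$ with characteristic module $\Lambda'$ concludes
\[
\mathrm{Gcd}_R W \;=\; \mathrm{Gpd}_{RW} R \;\leq\; \mathrm{pd}_{RW}\Lambda' + \mathrm{Gpd}_R R \;=\; n,
\]
as required.

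The main obstacle is confirming that $\Lambda^H$ is itself $R$-projective (rather than merely of finite $R$-projective dimension), since this is what Lemma \ref{lem:RG-Gp} explicitly requires. Finite $R$-projective dimension of $\Lambda^H$ drops out immediately from the resolution $P_\bullet^H$ (each $P_i^H$ being a summand of a free $RW$-module, hence $R$-free), but $R$-projectivity demands a finer analysis exploiting the structure of $\Lambda^H$ as a summand of a suitable $R$-projective module; this is precisely the delicate step executed in \cite[Proposition 2.5]{ET18}, and the Gorenstein-regularity hypothesis on $R$ is exactly what lets the same argument go through once the characteristic module is supplied by Lemma \ref{lem:SplitMonic}.
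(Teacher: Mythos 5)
Your proposal is correct and coincides with the paper's approach: the paper's proof is literally ``the argument of \cite[Proposition 2.5]{ET18} carries over verbatim once Lemma \ref{lem:SplitMonic} supplies the characteristic module,'' which is exactly your plan, and your sketch of that argument (pass to the fixed-point module $\Lambda^H$ as a characteristic module for the Weyl group, then invoke Lemma \ref{lem:RG-Gp}) is sound. The one step you flag as delicate, the $R$-projectivity of $\Lambda^H$, closes easily: since $H$ is finite, the $R$-projective module $\Lambda$ is Gorenstein projective over $RH$ (by the argument of Theorem \ref{thm:H2Gequ} applied to $\{1\}\leq H$) and has finite $RH$-projective dimension, hence is $RH$-projective by \cite[Proposition 10.2.3]{EJ00}, so $\Lambda^H$ is a direct summand of a direct sum of copies of $(RH)^H\cong R$ and is therefore $R$-projective.
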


\section{Extension of coefficient rings}

First, we consider the assignment ``Gcd'' when the coefficient rings are changed and the group is fixed.

\begin{proposition}\label{prop:RingOrd}
Let $R$ and $S$ be commutative rings, $G$ be any group. If $R$ is Gorenstein regular and $(G, S) \leq (G, R)$, then
$\mathrm{Gcd}_{S}G\leq \mathrm{Gcd}_{R}G$.
\end{proposition}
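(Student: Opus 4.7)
The plan is to transport the characteristic module for $G$ over $R$ supplied by Lemma \ref{lem:SplitMonic} to one for $G$ over $S$ via scalar extension, and then apply (the argument of) Lemma \ref{lem:RG-Gp} to conclude. The inequality is trivial when $\mathrm{Gcd}_{R}G = \infty$, so I would assume $\mathrm{Gcd}_{R}G = n < \infty$.

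First, because $R$ is Gorenstein regular, Lemma \ref{lem:SplitMonic} supplies an $R$-split $RG$-monomorphism $\iota: R\to\Lambda$ in which $\Lambda$ is an $R$-projective $RG$-module with $\mathrm{pd}_{RG}\Lambda = n$. I would then apply the functor $S\otimes_{R}-$ (using the $R$-algebra identification $SG\cong S\otimes_{R}RG$) to obtain an $S$-split $SG$-monomorphism $\iota_{S}: S\to\Lambda_{S}$, where $\Lambda_{S}:=S\otimes_{R}\Lambda$. Two facts are needed: $\Lambda_{S}$ is $S$-projective (immediate, since extension of scalars preserves projectivity), and $\mathrm{pd}_{SG}\Lambda_{S}\leq n$. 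For the latter, I would take a projective $RG$-resolution $0\to P_{n}\to\cdots\to P_{0}\to\Lambda\to 0$; because every $P_{i}$ and $\Lambda$ are $R$-projective, the resolution is $R$-split, so tensoring with $S$ preserves exactness, and each $S\otimes_{R}P_{i}$ is a projective $SG$-module as a direct summand of $S\otimes_{R}(RG)^{(I)}=(SG)^{(I)}$.

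Next, I would run the argument of Lemma \ref{lem:RG-Gp} with base ring $S$ and the trivial $SG$-module $M=S$, applied to the triple $(\iota_{S},\Lambda_{S},n)$. This would yield
$$\mathrm{Gcd}_{S}G=\mathrm{Gpd}_{SG}S\leq \mathrm{pd}_{SG}\Lambda_{S}+\mathrm{Gpd}_{S}S=n+0=n,$$
which is the desired inequality.

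The main obstacle is that Lemma \ref{lem:RG-Gp} is stated under the hypothesis that the base ring is commutative Gorenstein regular, while $S$ is not assumed to satisfy this. However, inspecting its proof shows that Gorenstein regularity is invoked only through Lemma \ref{lem:Gp+1}, to guarantee that $Q\otimes_{R}M$ is Gorenstein projective over the group ring whenever $M$ is Gorenstein projective over the base ring. In our specialization $M=S$ is literally the base ring and hence $S$-free, so $Q\otimes_{S}S\cong Q$ is trivially projective (a fortiori Gorenstein projective) over $SG$ for every projective $SG$-module $Q$. Consequently, the proof of Lemma \ref{lem:RG-Gp} carries through without any appeal to Lemma \ref{lem:Gp+1}, and the Gorenstein regularity of $S$ becomes unnecessary in this specialization.
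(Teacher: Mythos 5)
Your proposal is correct and follows essentially the same route as the paper: reduce to $\mathrm{Gcd}_{R}G=n<\infty$, produce the characteristic module $\iota\colon R\to\Lambda$ from Lemma \ref{lem:SplitMonic}, and base-change along $S\otimes_{R}-$, using the $R$-splitness of an $RG$-projective resolution of $\Lambda$ to get an $S$-split $SG$-monomorphism $S\to S\otimes_{R}\Lambda$ with $S\otimes_{R}\Lambda$ an $S$-projective $SG$-module of $\mathrm{pd}_{SG}\leq n$. The only divergence is the last step. The paper concludes by citing \cite[Proposition 1.4]{ET18}, which gives $\mathrm{Gpd}_{SG}S\leq\mathrm{pd}_{SG}(S\otimes_{R}\Lambda)$ over an arbitrary commutative ring, thereby sidestepping the fact that Lemma \ref{lem:RG-Gp} assumes the base ring is Gorenstein regular while $S$ carries no such hypothesis. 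You instead rerun the proof of Lemma \ref{lem:RG-Gp} for the trivial module $M=S$ and correctly observe that its Gorenstein-regularity hypothesis enters only through Lemma \ref{lem:Gp+1} (to make the modules $P_i\otimes M$ Gorenstein projective), which is vacuous here since $Q\otimes_{S}S\cong Q$ is already projective. This makes the argument self-contained within the paper's own lemmas rather than resting on an external citation; the trade-off is that you inherit the remaining steps of Lemma \ref{lem:RG-Gp} (in particular the passage from $\mathrm{Gpd}_{SG}(S\otimes_{S}\Lambda_{S})\leq n$ to $\mathrm{Gpd}_{SG}S\leq n$ via the $S$-split monomorphism), exactly as the paper does elsewhere. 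Either justification of the final inequality is acceptable.
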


\begin{proof}
Since the inequality obviously holds if $\mathrm{Gcd}_{R}G = \infty$, it only suffices to consider the case where $\mathrm{Gcd}_{R}G = n$ is finite. In this case, it follows immediately from Lemma \ref{lem:SplitMonic} that there exists an $R$-split monomorphism of $RG$-modules $\iota: R\rightarrow \Lambda$, where $\Lambda$ is $R$-projective and $\mathrm{pd}_{RG}\Lambda = n$. Note that $S$ is an extension ring of $R$, then $S$ is both a trivial $SG$-module and an $R$-module. By applying $S\otimes_{R}-$, we have an $S$-split monomorphism of $SG$-modules $S\otimes_{R}\iota: S\rightarrow S\otimes_{R}\Lambda$, where $S\otimes_{R}\Lambda$ is $S$-projective since $\Lambda$ is $R$-projective.

Now we assume that $0\rightarrow P_n\rightarrow \cdots\rightarrow P_1\rightarrow P_0\rightarrow \Lambda\rightarrow 0$ is an $RG$-projective resolution of $\Lambda$. Note that the sequence is $R$-split. Then, by applying $S\otimes_{R}-$, we have an exact sequence of $SG$-modules
$$0\longrightarrow S\otimes_{R}P_n\longrightarrow \cdots\longrightarrow S\otimes_{R}P_1\longrightarrow S\otimes_{R}P_0\longrightarrow S\otimes_{R}\Lambda\longrightarrow 0.$$
Since $S\otimes_{R}P_i$ are projective $SG$-modules, we have $\mathrm{pd}_{SG}(S\otimes_{R}\Lambda)\leq n$. For the trivial $SG$-module $S$, it follows immediately from \cite[Proposition 1.4]{ET18} that
$\mathrm{Gpd}_{SG}S \leq \mathrm{pd}_{SG}(S\otimes_{R}\Lambda)$. Then, $\mathrm{Gcd}_{S}G \leq \mathrm{Gcd}_{R}G$ holds as required.
\end{proof}

Specifically, considering the pairs $(G, R) \leq (G, \mathbb{Z})$ and $(G, \mathbb{Q}) \leq (G, \mathbb{Z})$, we reobtain
\cite[Propostion 2.1]{ET18} and \cite[Theorem 3.2]{Tal14} respectively.

\begin{corollary}\label{cor:GcdZG}
Let $G$ be any group, $R$ be any commutative ring. Then $\mathrm{Gcd}_{R}G\leq \mathrm{Gcd}_{\mathbb{Z}}G$; in particular, $\mathrm{Gcd}_{\mathbb{Q}}G \leq \mathrm{Gcd}_{\mathbb{Z}}G$.
\end{corollary}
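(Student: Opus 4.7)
The plan is to apply Proposition \ref{prop:RingOrd} in the special case where the base ring is $\mathbb{Z}$. To do so, I first need to verify the two hypotheses of that proposition for this choice. The ring $\mathbb{Z}$ is a principal ideal domain, hence has global dimension one, and consequently it is Gorenstein regular by the general inequality ${\rm G.gldim}(A) \leq {\rm gldim}(A)$ recorded earlier in the paper. Moreover, any commutative ring $R$ admits a unique unital ring homomorphism from $\mathbb{Z}$, so $R$ is an extension ring of $\mathbb{Z}$ in the sense of Definition \ref{def:order}, which is exactly the relation $(G, R) \leq (G, \mathbb{Z})$.

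With these verifications in place, Proposition \ref{prop:RingOrd} immediately yields the first inequality $\mathrm{Gcd}_{R}G \leq \mathrm{Gcd}_{\mathbb{Z}}G$. The particular case follows by specializing further to $R = \mathbb{Q}$, which is visibly an extension ring of $\mathbb{Z}$. There is essentially no obstacle to overcome here, since all the substantive work — the construction of the $R$-split monomorphism $R \to \Lambda$ via Lemma \ref{lem:SplitMonic} and its behavior under the base change $S \otimes_{R} -$ — has already been carried out in the proof of Proposition \ref{prop:RingOrd}. The corollary is therefore a direct specialization obtained by recognizing $\mathbb{Z}$ as an initial, Gorenstein regular object in the category of commutative rings.
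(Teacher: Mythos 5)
Your proposal is correct and follows exactly the route the paper takes: the corollary is obtained by specializing Proposition \ref{prop:RingOrd} to the base ring $\mathbb{Z}$, which is Gorenstein regular since it has global dimension one, together with the observation that every commutative ring $R$ (in particular $\mathbb{Q}$) is an extension of $\mathbb{Z}$ in the sense required, i.e.\ $(G,R)\leq (G,\mathbb{Z})$. Your additional remark that the proof of Proposition \ref{prop:RingOrd} only needs $S$ to carry an $R$-module structure via a ring homomorphism $R\to S$ (rather than an honest inclusion) is a worthwhile clarification, since the paper's claim covers rings such as finite fields into which $\mathbb{Z}$ does not embed.
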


We are now in a position to show that ``Gcd'' preserves the order of pairs of groups and rings.

\begin{corollary}\label{cor:KpOrd}
Let $G$ and $H$ be groups, $R$ and $S$ be commutative Gorenstein regular rings. If $(H, S) \leq (G, R)$, then we have
$\mathrm{Gcd}_{S}H\leq \mathrm{Gcd}_{R}G$.
\end{corollary}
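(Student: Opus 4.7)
The plan is to prove Corollary \ref{cor:KpOrd} by decomposing the comparison along the intermediate pair $(H, R)$, so that the ring is changed first and then the group. Concretely, the hypothesis $(H, S) \leq (G, R)$ unpacks as $H \leq G$ and $S \geq R$; hence we have a chain
$$(H, S) \;\leq\; (H, R) \;\leq\; (G, R),$$
in the sense of Definition \ref{def:order}, where the first inequality only changes the ring (keeping the group $H$ fixed) and the second only changes the group (keeping the ring $R$ fixed).

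The first step is to apply Proposition \ref{prop:RingOrd} to the pair $(H, S) \leq (H, R)$. Since $R$ is commutative Gorenstein regular and $S$ is an extension ring of $R$, Proposition \ref{prop:RingOrd} (with the group $H$ in place of $G$) yields
$$\mathrm{Gcd}_{S}H \;\leq\; \mathrm{Gcd}_{R}H.$$
The second step is to apply Proposition \ref{prop:GroupOrd} to the pair $(H, R) \leq (G, R)$. Since $R$ is commutative Gorenstein regular and $H$ is a subgroup of $G$, this gives
$$\mathrm{Gcd}_{R}H \;\leq\; \mathrm{Gcd}_{R}G.$$
Concatenating the two inequalities delivers the desired bound $\mathrm{Gcd}_{S}H \leq \mathrm{Gcd}_{R}G$.

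There is no real obstacle: both Propositions \ref{prop:GroupOrd} and \ref{prop:RingOrd} have already been established under exactly the hypotheses we have here (commutative Gorenstein regular coefficient ring), so the corollary follows just by chaining them through the intermediate pair. The only thing worth noting explicitly is that Gorenstein regularity of $S$ plays no role in the argument, since in the ring-change step only $R$ is required to be Gorenstein regular; the Gorenstein regularity of $S$ was included in the statement merely for symmetry with the setup of the section. One could, if desired, sharpen the formulation by dropping that hypothesis, but it is cleaner to leave it as stated and simply observe that the proof needs only the Gorenstein regularity of $R$.
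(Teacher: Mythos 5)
Your proof is correct, but it factors the comparison through the opposite corner of the square of pairs from the one the paper uses. The paper writes $(H,S)\leq (G,S)\leq (G,R)$: it first changes the group at the ring $S$ (Proposition \ref{prop:GroupOrd}, which requires $S$ to be commutative Gorenstein regular) to get $\mathrm{Gcd}_S H\leq \mathrm{Gcd}_S G$, and then changes the ring at the group $G$ (Proposition \ref{prop:RingOrd}) to get $\mathrm{Gcd}_S G\leq \mathrm{Gcd}_R G$. You instead go through $(H,R)$, changing the ring first and the group second, so that both steps invoke the Gorenstein regularity of $R$ only ($S$ need merely be a commutative extension ring of $R$). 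Both decompositions are legitimate applications of the two propositions, but yours genuinely buys something: your closing observation is right that the hypothesis that $S$ be Gorenstein regular is never used in your argument, whereas the paper's own decomposition does use it in its first step. So your route proves a slightly stronger statement than the corollary as stated, which the paper's proof as written does not.
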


\begin{proof}
If $(H, S) \leq (G, R)$, then we have both $(H, S) \leq (G, S)$ and $(G, S) \leq (G, R)$. By Proposition \ref{prop:GroupOrd}, it follows that
 ${\rm Gcd}_{S}H \leq {\rm Gcd}_{S}G$. By Proposition \ref{prop:RingOrd}, $\mathrm{Gcd}_{S}G\leq \mathrm{Gcd}_{R}G$ holds. Then, we deduce the required inequality
$\mathrm{Gcd}_{S}H\leq \mathrm{Gcd}_{R}G$.
\end{proof}

Next, we give an upper bound for Gorenstein projective dimension of modules over group rings, by using Gorenstein cohomological dimension of the group.

\begin{proposition}\label{prop:gd-bound}
Let $G$ be a group, $R$ be a commutative Gorenstein regular ring. Then
$${\rm G.gldim}(RG) \leq \mathrm{Gcd}_{R}G + {\rm G.gldim}(R) \leq \mathrm{Gcd}_{\mathbb{Z}}G + {\rm G.gldim}(R).$$
\end{proposition}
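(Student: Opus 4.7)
The plan is to reduce the inequality ${\rm G.gldim}(RG)\le \mathrm{Gcd}_RG+{\rm G.gldim}(R)$ to a pointwise bound on $\mathrm{Gpd}_{RG}M$ for an arbitrary $RG$-module $M$, and then take a supremum. Since $R$ is assumed Gorenstein regular we have ${\rm G.gldim}(R)<\infty$ automatically, so the only trivial case is $\mathrm{Gcd}_RG=\infty$, in which the right-hand side is infinity and the inequality is vacuous. From here on I may therefore assume $\mathrm{Gcd}_RG=n<\infty$.

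The first key step is to invoke Lemma \ref{lem:SplitMonic}: because $R$ is Gorenstein regular and $\mathrm{Gcd}_RG$ is finite, there exists an $R$-split $RG$-monomorphism $R\hookrightarrow \Lambda$, with $\Lambda$ an $R$-projective $RG$-module satisfying $\mathrm{pd}_{RG}\Lambda=\mathrm{Gcd}_RG=n$. The second step applies Lemma \ref{lem:RG-Gp} to this characteristic module $\Lambda$: for any $RG$-module $M$ we obtain
$$\mathrm{Gpd}_{RG}M\ \leq\ \mathrm{pd}_{RG}\Lambda+\mathrm{Gpd}_RM\ =\ \mathrm{Gcd}_RG+\mathrm{Gpd}_RM.$$
Since $\mathrm{Gpd}_RM\le {\rm G.gldim}(R)$ by the very definition of Gorenstein global dimension, this yields $\mathrm{Gpd}_{RG}M\le \mathrm{Gcd}_RG+{\rm G.gldim}(R)$. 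Taking the supremum over all $RG$-modules $M$ gives exactly the first desired inequality
$${\rm G.gldim}(RG)\ \leq\ \mathrm{Gcd}_RG+{\rm G.gldim}(R).$$

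The second inequality is a direct consequence of Corollary \ref{cor:GcdZG}, which is already established: for any commutative ring $R$ one has $\mathrm{Gcd}_RG\le \mathrm{Gcd}_{\mathbb Z}G$. Adding ${\rm G.gldim}(R)$ to both sides finishes the proof.

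There is no real obstacle here, since the two main lemmas have already been set up precisely for this type of argument; the only point worth being careful about is the case-splitting on whether $\mathrm{Gcd}_RG$ is finite, so that Lemma \ref{lem:SplitMonic} is actually applicable (and to make sure one does not accidentally invoke Lemma \ref{lem:RG-Gp} with $\mathrm{pd}_{RG}\Lambda=\infty$, which its hypotheses forbid). Everything else reduces to combining the characteristic-module construction with the bound produced by induction in Lemma \ref{lem:RG-Gp}, followed by a one-line invocation of Corollary \ref{cor:GcdZG}.
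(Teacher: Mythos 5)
Your proof is correct and follows exactly the paper's argument: handle the infinite case trivially, then combine Lemma \ref{lem:SplitMonic} (existence of the characteristic module $\Lambda$ with $\mathrm{pd}_{RG}\Lambda=\mathrm{Gcd}_RG$) with Lemma \ref{lem:RG-Gp} and take a supremum, deducing the second inequality from Corollary \ref{cor:GcdZG}. Your write-up merely makes explicit the steps the paper leaves as "follows immediately."
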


\begin{proof}
Note that the second inequality follows from Corollary \ref{cor:GcdZG}. The first inequality obviously holds if ${\rm Gcd}_{R}G $ is infinite. Furthermore, if we assume the finiteness of ${\rm Gcd}_{R}G$, then the inequality follows immediately from Lemmas \ref{lem:SplitMonic} and \ref{lem:RG-Gp}.
\end{proof}

\begin{remark}
By the above result, we get \cite[Corollary 1.6]{ET18} immediately: if $R$ has finite global dimension and $G$ is a group with ${\rm Gcd}_RG < \infty$, then all $RG$-modules $M$ have finite Gorenstein projective dimension and ${\rm Gpd}_{RG}M \leq \mathrm{Gcd}_{R}G + {\rm gldim}(R)$.
\end{remark}

\begin{example}\label{eg:12}
Let $R$ be a commutative 1-Gorenstein ring. If $G$ is a finite group, then Gorenstein projective dimension of any $RG$-module is less than or equal to $1$. Recall from  \cite[Theorem 3.6]{BDT09} that the group $G$ acts on a tree with finite vertex stabilizers if and only if ${\rm Gcd}_{\mathbb{Z}}G \leq 1$. For example, any non-trivial free group $G$ satisfying ${\rm Gcd}_{\mathbb{Z}}G = {\rm cd}_{\mathbb{Z}}G = 1$. If $G$ is a group which acts on a tree with finite vertex stabilizers, then Gorenstein projective dimension of any $RG$-module is less than or equal to $2$.
\end{example}

\section{Cofibrant modules and Gorenstein projective modules}

Inspired by \cite{CK97, DT10}, in this section we intend to study the relation between Benson's cofibrant modules and Gorenstein projective modules over group rings.

Let $G$ be a group. The ring of bounded functions from $G$ to $\mathbb{Z}$ is denoted by $B(G, \mathbb{Z})$. For any commutative ring $R$, $B(G, R):= B(G, \mathbb{Z})\otimes_{\mathbb{Z}} R$ is the module of functions from $G$ to $R$ which take finitely many values; see \cite[Definition 3.1]{CK97}.

The $RG$-module structure on $B(G, R)$ is given by $g\alpha(g') = \alpha(g^{-1}g')$ for any $\alpha\in B(G, R)$ and any $g, g'\in G$. It follows from \cite[Lemma 3.4]{Ben97} and \cite[Lemma 3.2]{CK97} that $B(G, R)$ is free as an $R$-module, and restricts to a free $RF$-module for any finite subgroup $F$ of $G$. Let $M$ be an $RG$-module. Then $M\otimes_{R}B(G, R)$ is an $RG$-module, where $G$ acts diagonally on the tensor product.

The following notion is due to Benson; see \cite[Definition 4.1]{Ben97}.

\begin{definition}\label{def:Cof}
Let $M$ be an $RG$-module. Then $M$ is {\em cofibrant} if $M\otimes_{R}B(G, R)$ is a projective $RG$-module.
\end{definition}

\begin{proposition}\label{prop:Cof-GP}
Let $G$ be a group, $R$ a commutative ring of coefficients. Then any cofibrant $RG$-module is Gorenstein projective. Moreover, for any $RG$-module $M$, we have $\mathrm{Gpd}_{RG}M\leq \mathrm{pd}_{RG}M\otimes_{R}B(G, R)$.
\end{proposition}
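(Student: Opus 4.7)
The plan is to reduce the inequality $\mathrm{Gpd}_{RG}M \leq \mathrm{pd}_{RG}(M \otimes_R B(G,R))$ to the base case in which $M$ is cofibrant, and then to prove that every cofibrant $RG$-module is Gorenstein projective by building a proper right resolution and invoking Lemma \ref{lem:Holm}. Writing $B = B(G,R)$, the reduction proceeds by induction on $n := \mathrm{pd}_{RG}(M \otimes_R B)$, which may be assumed finite. For the inductive step with $n \geq 1$, I would choose any short exact sequence $0 \to K \to F \to M \to 0$ of $RG$-modules with $F$ projective. Since $B$ is $R$-free, tensoring over $R$ is exact and produces $0 \to K \otimes_R B \to F \otimes_R B \to M \otimes_R B \to 0$; by Lemma \ref{lem:RG-tensor-hom}(2) the middle term is projective over $RG$, so $\mathrm{pd}_{RG}(K \otimes_R B) \leq n-1$. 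The inductive hypothesis gives $\mathrm{Gpd}_{RG}K \leq n-1$, and the standard inequality $\mathrm{Gpd}_{RG}M \leq \mathrm{Gpd}_{RG}K + 1$ applied to $0 \to K \to F \to M \to 0$ yields $\mathrm{Gpd}_{RG}M \leq n$.

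For the base case, I would first exhibit a proper right coresolution of the cofibrant module $M$. The constant functions yield an $R$-split $RG$-exact sequence $0 \to R \to B \to \overline{B} \to 0$ with $\overline{B} := B/R$ again $R$-free. Tensoring with $M$ over $R$ (diagonal $G$-action) gives an $RG$-exact sequence $0 \to M \to M \otimes_R B \to M \otimes_R \overline{B} \to 0$ whose middle term is projective by cofibrancy of $M$. I claim $M \otimes_R \overline{B}$ is again cofibrant: by associativity and commutativity of $\otimes_R$ we have $(M \otimes_R \overline{B}) \otimes_R B \cong (M \otimes_R B) \otimes_R \overline{B}$, which is projective over $RG$ by Lemma \ref{lem:RG-tensor-hom}(2) applied to the projective $M \otimes_R B$ and the $R$-projective $\overline{B}$. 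Iterating produces an exact sequence $0 \to M \to Q^0 \to Q^1 \to Q^2 \to \cdots$ with $Q^i := (M \otimes_R \overline{B}^{\otimes i}) \otimes_R B$ projective over $RG$ and every cocycle $M \otimes_R \overline{B}^{\otimes i}$ cofibrant.

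By Lemma \ref{lem:Holm}, it then remains to verify that every cofibrant $X$ belongs to ${}^\perp \mathcal{P}(RG)$, i.e., $\mathrm{Ext}^j_{RG}(X, P) = 0$ for all $j \geq 1$ and every projective $RG$-module $P$. My plan for this, which I expect to be the main technical obstacle, rests on the Hom-tensor adjunction (Lemma \ref{lem:RG-tensor-hom}(1)): since $X \otimes_R B$ is projective, $\mathrm{Ext}^j_{RG}(X, \mathrm{Hom}_R(B, P)) \cong \mathrm{Ext}^j_{RG}(X \otimes_R B, P) = 0$ for all $j \geq 1$. Applying $\mathrm{Ext}^{\bullet}_{RG}(X, -)$ to the $RG$-exact sequence $0 \to \mathrm{Hom}_R(\overline{B}, P) \to \mathrm{Hom}_R(B, P) \to P \to 0$, obtained by applying $\mathrm{Hom}_R(-, P)$ to the $R$-split sequence, yields dimension-shift isomorphisms $\mathrm{Ext}^j_{RG}(X, P) \cong \mathrm{Ext}^{j+1}_{RG}(X, \mathrm{Hom}_R(\overline{B}, P))$ for $j \geq 1$. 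Turning this chain of isomorphisms into outright vanishing is the crux: I anticipate splicing the above right coresolution with a standard left projective resolution of $X$ to form a two-sided acyclic complex of projective $RG$-modules with $X$ as its zeroth cocycle, and then verifying its total acyclicity against $\mathrm{Hom}_{RG}(-, P)$ directly, in the spirit of the complete-resolution argument of \cite[Theorem 3.5]{CK97}.
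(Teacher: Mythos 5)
Your overall architecture is the same as the paper's: reduce to the cofibrant case via syzygies, build the right coresolution from the $R$-split sequence $0\to R\to B\to \overline{B}\to 0$, and invoke Lemma \ref{lem:Holm}. The inductive reduction of the inequality and the observation that the cocycles $M\otimes_R\overline{B}^{\otimes i}$ are again cofibrant (so that the orthogonality condition for cocycles follows from the base claim) are both correct.

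However, the step you yourself flag as the crux --- showing $\mathrm{Ext}^j_{RG}(X,P)=0$ for $j\geq 1$, cofibrant $X$ and projective $P$ --- is left genuinely open, and the route you sketch does not close it. The dimension shift only trades the problem for the vanishing of $\mathrm{Ext}^{j+1}_{RG}(X,\mathrm{Hom}_R(\overline{B},P))$, and iterating produces an unbounded chain of isomorphisms with no termination; and ``verifying total acyclicity of the spliced complex against $\mathrm{Hom}_{RG}(-,P)$'' in the degrees coming from the left projective resolution is literally the assertion $\mathrm{Ext}^j_{RG}(X,P)=0$ that you are trying to prove, so that fallback is circular. The missing ingredient, which is how the paper (following \cite[Lemma 3.4]{CK97}) concludes, is that $P$ is an $RG$-direct summand of $\mathrm{Hom}_R(B,P)$: applying $\mathrm{Hom}_R(-,P)$ to the $R$-split $RG$-exact sequence $0\to R\to B\to\overline{B}\to 0$ yields an $RG$-exact sequence $0\to \mathrm{Hom}_R(\overline{B},P)\to \mathrm{Hom}_R(B,P)\to P\to 0$, and this epimorphism onto $P$ splits over $RG$ simply because $P$ is projective. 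Hence $\mathrm{Ext}^j_{RG}(X,P)$ is a direct summand of $\mathrm{Ext}^j_{RG}(X,\mathrm{Hom}_R(B,P))\cong \mathrm{Ext}^j_{RG}(X\otimes_R B,P)=0$, which is the one-line finish your argument needs.
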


\begin{proof}
The first assertion is essentially from \cite[Theorem 3.5]{CK97}; see also \cite[Proposition 4.1]{BDT09}. It remains to prove the inequality  $\mathrm{Gpd}_{RG}M\leq \mathrm{pd}_{RG}M\otimes_{R}B(G, R)$.

There is nothing to do unless we assume that  $\mathrm{pd}_{RG}M\otimes_{R}B(G, R) = n$ is finite. Consider a projective resolution $\cdots\rightarrow P_1\rightarrow P_0\rightarrow M\rightarrow 0$ of $M$, and let $K_n = \mathrm{Ker}(P_{n-1}\rightarrow P_{n-2})$. Since $P_i\otimes_{R}B(G, R)$ is projective for every $i\geq 0$, it follows from $\mathrm{pd}_{RG}M\otimes_{R}B(G, R) = n$ that $K_n\otimes_{R}B(G, R)$ is a projective $RG$-module, i.e. $K_n$ is cofibrant. Then $K_n$ is a Gorenstein projective $RG$-module, and
$\mathrm{Gpd}_{RG}M\leq  n$.
\end{proof}

Specifying the above $RG$-module $M$ to be the trivial $RG$-module $R$, we have $\mathrm{Gcd}_{R}G\leq \mathrm{pd}_{RG}B(G, R)$ immediately. Moreover, we have the following equality, which generalizes \cite[Theorem 1.18]{Bis21} from the ring of finite global dimension to Gorenstein regular ring.

\begin{corollary}\label{cor:Gcd=pdB1}
Let $R$ be a commutative Gorenstein regular ring, $G$ be any group. Then ${\rm Gcd}_{R}G = {\rm pd}_{RG}B(G, R)$ if the latter is finite.
\end{corollary}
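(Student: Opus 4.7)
The plan is to combine Proposition \ref{prop:Cof-GP} with the two technical lemmas from Section 2 (Lemmas \ref{lem:SplitMonic} and \ref{lem:RG-Gp}), both of which are available precisely because $R$ is assumed to be commutative Gorenstein regular. One direction of the equality is essentially free: specializing Proposition \ref{prop:Cof-GP} to the trivial module $M = R$ and using $R\otimes_R B(G,R)\cong B(G,R)$ yields
$$\mathrm{Gcd}_R G \;=\; \mathrm{Gpd}_{RG}R \;\leq\; \mathrm{pd}_{RG}B(G,R).$$
In particular, the hypothesis that the right-hand side is finite forces $\mathrm{Gcd}_R G$ to be finite, so Lemma \ref{lem:SplitMonic} becomes applicable.

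For the reverse inequality, I would write $n = \mathrm{Gcd}_R G$ and invoke Lemma \ref{lem:SplitMonic} to produce an $R$-split $RG$-monomorphism $R\hookrightarrow \Lambda$ with $\Lambda$ an $R$-projective $RG$-module satisfying $\mathrm{pd}_{RG}\Lambda = n$. Then Lemma \ref{lem:RG-Gp}, applied to $M = B(G,R)$, gives
$$\mathrm{Gpd}_{RG}B(G,R) \;\leq\; \mathrm{pd}_{RG}\Lambda + \mathrm{Gpd}_R B(G,R) \;=\; n + 0 \;=\; n,$$
since $B(G,R)$ is free as an $R$-module by \cite[Lemma 3.4]{Ben97}, and hence has Gorenstein projective dimension zero over $R$.

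The final step is to upgrade this Gorenstein bound to an ordinary projective bound. This is where the finiteness hypothesis $\mathrm{pd}_{RG}B(G,R) < \infty$ is used: by the standard fact (see e.g.\ \cite[Proposition 2.27]{Hol04}) that Gorenstein projective dimension and projective dimension agree whenever the latter is finite, we conclude
$$\mathrm{pd}_{RG}B(G,R) \;=\; \mathrm{Gpd}_{RG}B(G,R) \;\leq\; n \;=\; \mathrm{Gcd}_R G,$$
which together with the first paragraph gives the desired equality. There is no real obstacle in this argument; the point of the corollary is to observe that once Lemmas \ref{lem:SplitMonic} and \ref{lem:RG-Gp} are available over Gorenstein regular coefficient rings, the identification $\mathrm{Gcd}_R G = \mathrm{pd}_{RG}B(G,R)$ drops out almost immediately, bypassing the finite-global-dimension machinery used in \cite[Theorem 1.18]{Bis21}.
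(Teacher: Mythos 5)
Your argument is correct and is essentially the paper's own proof, just written out in more detail: the paper likewise gets $\mathrm{Gcd}_RG\leq \mathrm{pd}_{RG}B(G,R)$ from Proposition \ref{prop:Cof-GP} with $M=R$, and then combines Lemma \ref{lem:SplitMonic} (to produce $\Lambda$ with $\mathrm{pd}_{RG}\Lambda=\mathrm{Gcd}_RG$) with Lemma \ref{lem:RG-Gp} applied to the $R$-free module $B(G,R)$, upgrading the Gorenstein bound to a projective one via the finiteness hypothesis. No gaps; your expansion faithfully reconstructs the one-sentence argument in the paper.
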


\begin{proof}
By \cite[Lemma 3.3]{CK97}, there exists an $R$-split exact sequence of $RG$-modules
$0\rightarrow R\rightarrow B(G, R)$, where $B(G, R)$ is free as an $R$-module. If ${\rm pd}_{RG}B(G, R)$ is finite, we infer from Lemmas \ref{lem:SplitMonic} and \ref{lem:RG-Gp} that
${\rm pd}_{RG}B(G, R)= {\rm Gcd}_{R}G$ holds.
\end{proof}

\begin{lemma}\label{lem:cof}
Let $M$ be a Gorenstein projective $RG$-module. Then $M$ is cofibrant $($i.e. ${\rm pd}_{RG}M\otimes_{R}B(G, R) = 0)$ if and only if ${\rm pd}_{RG}M\otimes_{R}B(G, R) < \infty$.
\end{lemma}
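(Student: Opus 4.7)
The ``only if'' direction is immediate from the definition of cofibrant module: if $M\otimes_R B(G,R)$ is projective then its projective dimension is $0<\infty$.

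For the ``if'' direction, suppose $M$ is Gorenstein projective and $n:=\mathrm{pd}_{RG}(M\otimes_R B(G,R))<\infty$; the goal is $n=0$. Following the last paragraph of the proof of Proposition~\ref{prop:Cof-GP}, I would fix a projective resolution $\cdots\to P_1\to P_0\to M\to 0$ of $M$. Since $B(G,R)$ is $R$-flat and each $P_i\otimes_R B(G,R)$ is a projective $RG$-module by Lemma~\ref{lem:RG-tensor-hom}(2), tensoring produces a projective resolution of $M\otimes_R B(G,R)$, and the $n$-th syzygy $K_n\otimes_R B(G,R)$ must therefore be projective --- that is, $K_n$ is cofibrant. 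As a syzygy of the Gorenstein projective module $M$, the module $K_n$ is itself Gorenstein projective.

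The crucial reduction is now the claim: \emph{every Gorenstein projective $RG$-module that is also cofibrant must be projective}. Granting this applied to $K_n$, one obtains $\mathrm{pd}_{RG}(M)\leq n<\infty$; since a Gorenstein projective module of finite projective dimension is projective, $M$ is projective, hence $M\otimes_R B(G,R)$ is projective by Lemma~\ref{lem:RG-tensor-hom}(2), and therefore $n=0$.

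To establish the key claim, let $K$ be Gorenstein projective with $K\otimes_R B(G,R)$ projective. For any $RG$-module $N$, the adjunction
\[
\mathrm{Ext}^i_{RG}(K\otimes_R B(G,R),N)\cong \mathrm{Ext}^i_{RG}\bigl(K,\mathrm{Hom}_R(B(G,R),N)\bigr),
\]
valid since $B(G,R)$ is $R$-flat, converts the projectivity of $K\otimes_R B(G,R)$ into the vanishing $\mathrm{Ext}^i_{RG}(K,\mathrm{Hom}_R(B(G,R),N))=0$ for all $i\geq 1$. Transferring this via a splitting $N\hookrightarrow \mathrm{Hom}_R(B(G,R),N)$ in the style of \cite[Lemma 3.4]{CK97} yields $\mathrm{Ext}^i_{RG}(K,N)=0$ for every $N$ and $i\geq 1$, forcing $K$ projective. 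The hard part will be this last transfer step, since \cite[Lemma 3.4]{CK97} is stated for projective modules only; a reasonable alternative is to argue directly that the $R$-split short exact sequence $0\to K\to K\otimes_R B(G,R)\to K\otimes_R D\to 0$, obtained by tensoring $0\to R\to B(G,R)\to D\to 0$ with $K$, is in fact $RG$-split, thereby realising $K$ as a direct summand of the projective $RG$-module $K\otimes_R B(G,R)$.
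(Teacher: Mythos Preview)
Your ``crucial reduction'' claim --- that every Gorenstein projective $RG$-module which is also cofibrant must be projective --- is simply false, and this breaks the argument. Take $G$ any nontrivial finite group and $R=\mathbb{Z}$. Then $B(G,\mathbb{Z})$ is free over $\mathbb{Z}G$, so the trivial module $\mathbb{Z}$ is cofibrant; it is also Gorenstein projective since $\mathrm{Gcd}_{\mathbb{Z}}G=0$ (Theorem~\ref{thm:fgroup}); yet $\mathbb{Z}$ is certainly not $\mathbb{Z}G$-projective. More broadly, Proposition~\ref{prop:Cof-GP} says every cofibrant module is Gorenstein projective, so your claim would force every cofibrant module to be projective, which is absurd. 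Both of your proposed justifications fail for the same reason: \cite[Lemma~3.4]{CK97} only gives the splitting $N\hookrightarrow\mathrm{Hom}_R(B,N)$ for \emph{projective} $N$, and the sequence $0\to K\to K\otimes_R B\to K\otimes_R D\to 0$ is $R$-split but not $RG$-split in general (if it were, $K=\mathbb{Z}$ above would be projective). Notice too that your argument, if it worked, would conclude that $M$ itself is projective --- strictly stronger than what the lemma asserts, and false.

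The paper's proof is a one-line application of \cite[Lemma~4.5(ii)]{Ben97}: that lemma says that if $\mathrm{pd}_{RG}(M\otimes_R B(G,R))<\infty$ and every extension of $M$ by a module of finite projective dimension splits, then $M$ is cofibrant. The splitting hypothesis is exactly $\mathrm{Ext}^1_{RG}(M,W)=0$ for all $W$ with $\mathrm{pd}_{RG}W<\infty$, which holds automatically for Gorenstein projective $M$.
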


\begin{proof}
The ``only if'' part is trivial. Assume that ${\rm pd}_{RG}M\otimes_{R}B(G, R)$ is finite. It follows from \cite[Lemma 4.5(ii)]{Ben97} that if every extension of $M$ by an $RG$-module of finite projective dimension splits, then $M$ is cofibrant. It is a well-known fact that for any Gorenstein projective module $M$ and any $RG$-module $W$ of finite projective dimension, one has ${\rm Ext}^i_{RG}(M, W) = 0$ for any $i\geq 1$. Then, the ``if'' part holds.
\end{proof}

We now have some basic properties of cofibrant modules. We use $\mathcal{C}of(RG)$ to denote the class of cofibrant $RG$-modules.

\begin{proposition}\label{prop:cof}
Let $G$ be a group and $R$ be a commutative ring of coefficients. Then $\mathcal{C}of(RG)$ contains all projective $RG$-modules. For any exact sequence  $0\rightarrow L\rightarrow M\rightarrow N\rightarrow 0$ of $RG$-modules, if $N$ is cofibrant, then $M$ is cofibrant if and only if so is $L$. If $L$ and $M$ are cofibrant and ${\rm Ext}^1_{RG}(N, Q) = 0$ for any $RG$-module $Q$ of finite projective dimension, then $N$ is cofibrant as well.
\end{proposition}

\begin{proof}
It is sufficient to prove the last assertion, since the others follow by the definition of cofibrant modules. For any exact sequence $0\rightarrow L\rightarrow M\rightarrow N\rightarrow 0$, where $L$ and $M$ are cofibrant, by applying the functor $-\otimes_{R}B(G, R)$ we obtain that ${\rm pd}_{RG}N\otimes_{R}B(G, R) \leq 1$. Furthermore, if ${\rm Ext}^1_{RG}(N, Q) = 0$ for any $RG$-module $Q$ with ${\rm pd}_{RG}Q < \infty$, then it follows immediately from  \cite[Lemma 4.5(ii)]{Ben97} that $N$ is cofibrant.
\end{proof}

It was suggested in \cite[Conjecture A]{DT10} that over the integral group ring $\mathbb{Z}G$, Gorenstein projective modules coincide with Benson's cofibrant modules. For the relations between these two classes of modules, we have the following.

Recall that a group $G$ is of type $\Phi_R$, if it has the property that an $RG$-module is of finite projective dimension if and only if its restriction to any finite subgroup is of finite projective dimension; see \cite[Definition 2.1]{MS19}. It was conjectured that a group $G$ is of type $\Phi_\mathbb{Z}$ if and only if $G$ admits a finite dimensional model for $\underline{E}G$, the classifying space for the family of the finite subgroups of $G$; the sufficient condition was proved to be true. We refer to \cite{Tal07} for the details.

\begin{proposition}\label{prop:GP-Cof1}
Let $R$ be a commutative ring, and $G$ be a group of type $\Phi_R$. For any Gorenstein projective $RG$-module $M$, if ${\rm pd}_RM < \infty$, then $M$ is cofibrant.
\end{proposition}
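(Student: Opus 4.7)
The plan is to verify that $M\otimes_{R}B(G,R)$ has finite projective dimension as an $RG$-module; once this is known, Lemma \ref{lem:cof}---whose hypothesis that $M$ be Gorenstein projective is supplied by assumption---immediately upgrades the conclusion to $M\otimes_{R}B(G,R)$ being projective, i.e.\ $M$ being cofibrant. So the whole problem reduces to a finiteness statement for ${\rm pd}_{RG}(M\otimes_{R}B(G,R))$.

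To establish that finiteness, I would invoke the defining property of groups of type $\Phi_R$: it suffices to show that for every finite subgroup $F$ of $G$, the restriction $(M\otimes_{R}B(G,R))|_F$ has finite projective dimension over $RF$. Fix such an $F$. Recall that $B(G,R)|_F$ is $RF$-free, so one has an $RF$-isomorphism $B(G,R)|_F \cong \bigoplus_{i\in I} RF$, and tensoring with $M$ (with the diagonal $F$-action carried throughout) yields
$$(M\otimes_{R}B(G,R))|_F \;\cong\; \bigoplus_{i\in I}\bigl(M\otimes_{R}RF\bigr)$$
as $RF$-modules with diagonal action on each summand.

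The heart of the argument is the standard ``untwisting'' isomorphism of $RF$-modules
$$\varphi:\; M\otimes_{R}RF \;\stackrel{\sim}{\longrightarrow}\; M|_R\otimes_{R}RF,\qquad m\otimes x \longmapsto x^{-1}m\otimes x,$$
where the left-hand side carries the diagonal $F$-action and the right-hand side the action on the second tensor factor only, and $M|_R$ denotes $M$ viewed merely as an $R$-module. The target is precisely the induced module $\mathrm{Ind}_{\{1\}}^F(M|_R)$. Since ${\rm pd}_R M < \infty$ by hypothesis, picking a finite $R$-projective resolution of $M|_R$ and applying the exact functor $RF\otimes_R-$ (exact because $RF$ is $R$-free) produces a finite $RF$-projective resolution of $\mathrm{Ind}_{\{1\}}^F(M|_R)$, whence ${\rm pd}_{RF}(M\otimes_{R}B(G,R))|_F \leq {\rm pd}_R M < \infty$, uniformly in $F$.

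The principal---though minor---technical point is verifying that $\varphi$ is genuinely $F$-equivariant; this amounts to a direct calculation comparing $g(m\otimes x) = gm\otimes gx$ on the left with $g(n\otimes x) = n\otimes gx$ on the right, together with checking the obvious two-sided inverse $n\otimes x\mapsto xn\otimes x$. With this in hand, the argument runs as outlined: type $\Phi_R$ upgrades the uniform finite pd bound on all $RF$-restrictions to ${\rm pd}_{RG}(M\otimes_R B(G,R)) < \infty$, and Lemma \ref{lem:cof} then delivers that $M$ is cofibrant.
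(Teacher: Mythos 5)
Your proposal is correct and follows essentially the same route as the paper: restrict to finite subgroups $F$, use that $B(G,R)|_F$ is $RF$-free so that $(M\otimes_R B(G,R))|_F$ has finite projective dimension over $RF$ (via the standard untwisting isomorphism, which the paper leaves implicit by citing Benson), invoke the type $\Phi_R$ property to get ${\rm pd}_{RG}(M\otimes_R B(G,R))<\infty$, and finish with Lemma \ref{lem:cof}. The only cosmetic remark is that in defining $\varphi$ you should write the untwisting map on group-element basis vectors $m\otimes g$ rather than on arbitrary $x\in RF$, since $x^{-1}$ is not defined for a general ring element; this does not affect the argument.
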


\begin{proof}
It follows from \cite[Lemma 3.4(ii)]{Ben97} that for any finite subgroup $H$ of $G$, $B(G, R)$ is restricted to be a free $RH$-module. For any $RG$-module $M$, if ${\rm pd}_RM$ is finite, then ${\rm pd}_{RH}M\otimes_{R}B(G, R)$ is also finite. Since $G$ is of type $\Phi_R$,  we infer that ${\rm pd}_{RG}M\otimes_{R}B(G, R)$ is finite. Moreover, if $M$ is Gorenstein projective, then it follows directly from Lemma \ref{lem:cof} that $M$ is cofibrant.
\end{proof}

Kropholler introduced in \cite{Kro93} the notion of $\mathrm{LH}\mathfrak{F}$-groups. Let $\mathfrak{F}$ be the class of finite groups. A class of groups
${\rm H}_{\alpha}\mathfrak{F}$ for each ordinal $\alpha$ is defined as follows. Let ${\rm H}_0\mathfrak{F} = \mathfrak{F}$, and for $\alpha > 0$ we define a group $G$ to be in
${\rm H}_{\alpha}\mathfrak{F}$ if $G$ acts cellularly on a finite dimensional contractible CW-complex $X$, in such a way that the setwise stabilizer of each cell is equal to the pointed stabilizer, and is in ${\rm H}_{\beta}\mathfrak{F}$ for some $\beta < \alpha$. The class ${\rm H}\mathfrak{F}$ is then the union of all the ${\rm H}_{\alpha}\mathfrak{F}$. Here, the letter ``H'' stands for ``hierarchy''. Finally, a group $G$ is said to be an ${\rm LH}\mathfrak{F}$-group if every finitely generated subgroup of $G$ is an ${\rm H}\mathfrak{F}$-group. It follows from \cite{Kro93, BDT09} that soluble-by-finite groups, linear groups, groups of finite cohomological dimension over $\mathbb{Z}$, and groups with a faithful representation as endomorphisms of a noetherian module over a commutative ring, are all contained in $\mathrm{LH}\mathfrak{F}$.

Inspired by \cite[Theorem B]{DT10}, we have the following.

\begin{proposition}\label{prop:GP-Cof2}
Let $G$ be an $\mathrm{LH}\mathfrak{F}$-group, $R$ a commutative Gorenstein regular ring. For any Gorenstein projective $RG$-module $M$, if ${\rm pd}_RM < \infty$, then $M$ is cofibrant.
\end{proposition}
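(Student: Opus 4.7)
The strategy parallels that of Proposition \ref{prop:GP-Cof1} for groups of type $\Phi_R$, with the ${\rm LH}\mathfrak{F}$-hypothesis replacing the $\Phi_R$-condition; the main technical ingredient, in place of the $\Phi_R$-characterization of finite projective dimension, will be a transfinite induction along Kropholler's ${\rm H}\mathfrak{F}$-hierarchy. By Lemma \ref{lem:cof}, since $M$ is Gorenstein projective over $RG$, it suffices to establish that ${\rm pd}_{RG}(M \otimes_{R} B(G,R)) < \infty$.

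As in the proof of Proposition \ref{prop:GP-Cof1}, for every finite subgroup $F$ of $G$, the fact that $B(G,R)$ is free as an $RF$-module (\cite[Lemma 3.4(ii)]{Ben97}), combined with ${\rm pd}_{R} M < \infty$, yields the uniform bound ${\rm pd}_{RF}(M \otimes_{R} B(G,R)) \leq {\rm pd}_{R} M$. The core technical step will be a transfinite induction on $\alpha$ establishing that for every subgroup $K$ of $G$ with $K \in {\rm H}_{\alpha}\mathfrak{F}$, the module $M \otimes_{R} B(G,R)$ has finite $RK$-projective dimension when viewed by restriction. The base case $\alpha = 0$ is the preceding sentence. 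For the successor case, $K$ acts cellularly on a finite-dimensional contractible CW-complex $X$ whose cell stabilizers $K_{\sigma}$ lie in ${\rm H}_{\beta(\sigma)}\mathfrak{F}$ for some $\beta(\sigma) < \alpha$. Tensoring the augmented cellular chain complex $0 \to C_{n}(X) \to \cdots \to C_{0}(X) \to R \to 0$ (whose terms $C_{i}(X)$ are direct sums of permutation modules $R[K/K_{\sigma}]$) over $R$ with $M \otimes_{R} B(G,R)$ produces a finite-length $RK$-resolution whose terms are direct sums of induced modules ${\rm Ind}_{K_{\sigma}}^{K}(M \otimes_{R} B(G,R))$. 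Each such term has finite $RK$-projective dimension by the inductive hypothesis and the fact that induction preserves finite projective dimension, so the bounded resolution bounds ${\rm pd}_{RK}(M \otimes_{R} B(G,R))$. Limit ordinals are handled by a union argument.

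The most delicate step, and the main obstacle, is the final one: since $G$ is ${\rm LH}\mathfrak{F}$ rather than ${\rm H}\mathfrak{F}$, we must lift finite projective dimension from every ${\rm H}\mathfrak{F}$-subgroup to $G$ itself. Every finitely generated subgroup of $G$ lies in ${\rm H}\mathfrak{F}$, so the preceding transfinite induction provides a bound on the $RH$-projective dimension of $M \otimes_{R} B(G,R)$ (restricted to $RH$) for every such $H$. To upgrade this to a bound over $RG$, one invokes the Benson--Kropholler machinery of \cite{Kro93} and \cite[\S10]{Ben97}, which in the spirit of \cite[Theorem B]{DT10} supplies the required local-to-global principle for ${\rm LH}\mathfrak{F}$-groups; here one exploits the $R$-projectivity of $M$ (extracted from the Gorenstein projectivity of $M$ over $RG$ together with ${\rm pd}_{R} M < \infty$, via the totally acyclic complex witnessing Gorenstein projectivity restricted to $R$) and a direct-limit argument along the filtered system of finitely generated subgroups of $G$. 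With ${\rm pd}_{RG}(M \otimes_{R} B(G,R)) < \infty$ in hand, Lemma \ref{lem:cof} concludes the proof that $M$ is cofibrant.
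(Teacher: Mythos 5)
Your outline does match the paper's strategy in its broad architecture (reduce via Lemma \ref{lem:cof} to finiteness of ${\rm pd}_{RG}(M\otimes_R B(G,R))$, run a transfinite induction along the $\mathrm{H}\mathfrak{F}$-hierarchy using the augmented cellular chain complex, then pass to $\mathrm{LH}\mathfrak{F}$ by a limit argument), but two steps as you state them would fail. First, your inductive statement records only \emph{finite projective dimension} over the stabilizers, whereas the induction must carry \emph{projectivity}. The chain modules $C_i$ are direct sums of permutation modules indexed by possibly infinitely many cells, and the stabilizers lie in various strata $\mathrm{H}_{\beta(\sigma)}\mathfrak{F}$ with no uniform bound on the resulting projective dimensions (the bounds grow with the dimensions of the CW-complexes witnessing membership in the hierarchy). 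Since the projective dimension of a direct sum is the supremum of those of the summands, ``each summand has finite projective dimension'' does not yield finite projective dimension of $C_i\otimes(M\otimes_R B(G,R))$. The paper avoids this by upgrading finite projective dimension to projectivity at every stage, using the Gorenstein projectivity of $M$ via Lemma \ref{lem:cof} (i.e.\ Benson's Lemma 4.5(ii)); that upgrade is precisely where the hypothesis on $M$ enters the induction, and it is what renders the infinite direct sums harmless.

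Second, the passage from $\mathrm{H}\mathfrak{F}$-subgroups to $G$ itself cannot be a ``direct-limit argument along the filtered system of finitely generated subgroups'': that system is filtered but not a chain, and the machinery you gesture at does not apply to it as stated. The paper's actual argument is an induction on the cardinality of $G$: a countable $\mathrm{LH}\mathfrak{F}$-group already lies in $\mathrm{H}\mathfrak{F}$; an uncountable $G$ is written as a continuous ascending union of subgroups $G_\alpha$ of strictly smaller cardinality, the induction hypothesis gives that $M\otimes_R B(G,R)$ is \emph{projective} over each $RG_\alpha$, and Benson's Lemma 5.6 then yields ${\rm pd}_{RG}(M\otimes_R B(G,R))\le 1$. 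Note that this lemma again requires projectivity (not merely finite projective dimension) over each member of the chain, so the weakening in your inductive statement propagates into this final step as well. With these two repairs your argument becomes the paper's proof.
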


\begin{proof}
We first prove the assertion for $\mathrm{H}\mathfrak{F}$-groups, using transfinite induction on the ordinal number $\alpha$ such that $G$ belongs to $\mathrm{H}_{\alpha}\mathfrak{F}$. For $\alpha = 0$, i.e. $G$ is a finite group, $B(G, R)$ is a free $RG$-module, and then the finiteness of ${\rm pd}_{RG}M\otimes_{R}B(G, R)$ follows from the hypothesis ${\rm pd}_RM < \infty$; furthermore, $M$ is cofibrant by Lemma \ref{lem:cof}. Now let us assume that the result is true for any $H\in \mathrm{H}_{\beta}\mathfrak{F}$ and all $\beta < \alpha$. A crucial algebraic consequence of the definition of $\mathrm{H}\mathfrak{F}$-groups is that there exists an augmented cellular chain on a finite dimensional contractible CW-complex $X$, that is, we have an exact sequence of $\mathbb{Z}G$-modules
$$0\longrightarrow C_n\longrightarrow \cdots\longrightarrow C_1\longrightarrow C_0\longrightarrow \mathbb{Z}\longrightarrow 0,$$
where each $C_i$ is a direct sum of permutation modules of the form $\mathbb{Z}[G/H]$ with $H$ a subgroup of $G$ satisfying $H\in \mathrm{H}_{\beta}\mathfrak{F}$ for $\beta < \alpha$.

For any subgroup $H$ of $G$, $RG$ is a free right $RH$-module since the right translation action of $H$ on $G$ is free, and we can take any set of representatives for the left cosets $gH$ as a basis. Then, for any projective $RH$-module $P$, it follows from the adjunction ${\rm Hom}_{RG}(RG\otimes_{RH}P, -)\cong {\rm Hom}_{RH}(P, {\rm Hom}_{RG}(RG, -))$ that the module ${\rm Ind}_H^GP = RG\otimes_{RH}P$ is projective over $RG$. For each $\mathbb{Z}[G/H] = \mathrm{Ind}_{H}^{G}\mathbb{Z}$, there is an isomorphism (see \cite[Proposition III 5.6]{Bro82})
$${\rm Ind}_H^G\mathbb{Z}\otimes_{\mathbb{Z}}(M\otimes_{R}B(G, R)) \cong {\rm Ind}_H^G(M\otimes_{R}B(G, R)).$$
Invoking the induction hypothesis that $M\otimes_{R}B(G, R)$ is a projective $RH$-module, it follows that $\mathbb{Z}[G/H]\otimes_{\mathbb{Z}}(M\otimes_{R}B(G, R))$ is projective over $RG$ and thus each $C_i\otimes_{\mathbb{Z}}(M\otimes_{R}B(G, R))$ is projective over $RG$. Tensoring the above sequence with $M\otimes_{R}B(G, R)$ gives us an exact sequence of $RG$-modules
$$0\rightarrow C_n\otimes_{\mathbb{Z}}(M\otimes_{R}B(G, R))\rightarrow \cdots\rightarrow C_0\otimes_{\mathbb{Z}}(M\otimes_{R}B(G, R))\rightarrow M\otimes_{R}B(G, R)\rightarrow 0,$$
which implies that ${\rm pd}_{RG}M\otimes_{R}B(G, R) < \infty$ for any Gorenstein projective $RG$-module $M$. Hence, we infer from Lemma \ref{lem:cof} that $M$ is cofibrant.

Next, we prove the result for $\mathrm{LH}\mathfrak{F}$-groups by induction on the cardinality of the group. If $G$ is a countable group, then it belongs to $\mathrm{H}\mathfrak{F}$, so the assertion holds by the above argument. Assume that $G$ is uncountable and the result is true for any groups with cardinality strictly smaller than that of $G$. Then $G$ can be expressed as the union of an ascending chain of subgroups $G_{\alpha}$, where $\alpha < \gamma$ for some ordinal number $\gamma$, such that each $G_{\alpha}$ has cardinality strictly smaller that $|G|$. By the induction hypothesis, $M\otimes_{R}B(G, R)$ is projective over each $RG_{\alpha}$. Then, it follows directly from \cite[Lemma 5.6]{Ben97} that ${\rm pd}_{RG}M\otimes_{R}B(G, R) \leq 1$. By Lemma \ref{lem:cof}, it follows that the Gorenstein projective $RG$-module $M$ is cofibrant.
\end{proof}

We might get the following, which recovers \cite[Corollary C]{DT10}, \cite[Theorem 3.11]{Bis21+} and \cite[Theorem 3.4]{Bis21}.

\begin{corollary}\label{cor:GP=Cof}
Let $R$ be a commutative ring of finite global dimension. If $G$ is either a group of type $\Phi_R$, or an $\mathrm{LH}\mathfrak{F}$-group, then Gorenstein projective $RG$-modules coincide with cofibrant modules. Moreover, for any $RG$-module $M$, we have  $\mathrm{pd}_{RG}M\otimes_{R}B(G, R) = \mathrm{Gpd}_{RG}M$.
\end{corollary}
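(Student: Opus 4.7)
The plan is to split the corollary into its two assertions and handle each by packaging together propositions from earlier in the section, exploiting the fact that finite global dimension of $R$ is exactly what is needed to trigger the hypotheses of Propositions \ref{prop:GP-Cof1} and \ref{prop:GP-Cof2}.

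For the first assertion (coincidence of Gorenstein projective and cofibrant $RG$-modules), one direction is free: Proposition \ref{prop:Cof-GP} already shows that every cofibrant $RG$-module is Gorenstein projective, with no restriction on $R$ or $G$. For the reverse inclusion I would start from a Gorenstein projective $RG$-module $M$ and observe that since $\mathrm{gldim}(R) < \infty$, the underlying $R$-module of $M$ automatically satisfies ${\rm pd}_R M \leq \mathrm{gldim}(R) < \infty$. Under this hypothesis, Proposition \ref{prop:GP-Cof1} (when $G$ is of type $\Phi_R$) or Proposition \ref{prop:GP-Cof2} (when $G \in \mathrm{LH}\mathfrak{F}$) concludes that $M$ is cofibrant.

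For the equality $\mathrm{pd}_{RG}M\otimes_R B(G,R) = \mathrm{Gpd}_{RG}M$, the inequality $\mathrm{Gpd}_{RG}M \leq \mathrm{pd}_{RG}M\otimes_R B(G,R)$ is precisely the second statement of Proposition \ref{prop:Cof-GP}. For the reverse inequality I would assume $\mathrm{Gpd}_{RG}M = n$ is finite, take an $RG$-projective resolution $\cdots \to P_1 \to P_0 \to M \to 0$, and let $K_n$ be the $n$th syzygy, which is Gorenstein projective. By the first part of the corollary, $K_n$ is cofibrant, so $K_n\otimes_R B(G,R)$ is a projective $RG$-module. Applying $-\otimes_R B(G,R)$ to the truncated resolution yields the exact sequence
$$0 \longrightarrow K_n\otimes_R B(G,R) \longrightarrow P_{n-1}\otimes_R B(G,R) \longrightarrow \cdots \longrightarrow P_0\otimes_R B(G,R) \longrightarrow M\otimes_R B(G,R) \longrightarrow 0,$$
the exactness following from the fact that $B(G,R)$ is $R$-free. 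By Lemma \ref{lem:RG-tensor-hom}(2), each $P_i\otimes_R B(G,R)$ is a projective $RG$-module, and the leftmost term is projective by cofibrancy of $K_n$. Hence $\mathrm{pd}_{RG}M\otimes_R B(G,R) \leq n$, as required.

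There is no genuine obstacle here: the substantial content has already been carried out in Propositions \ref{prop:Cof-GP}, \ref{prop:GP-Cof1} and \ref{prop:GP-Cof2}. The only observation driving the corollary is that finite global dimension of $R$ supplies exactly the finite $R$-projective dimension condition that the two earlier propositions demand, thereby upgrading their conditional conclusions to a clean equivalence.
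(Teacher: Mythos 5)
Your proposal is correct and follows essentially the route the paper intends: the coincidence of the two classes is Proposition \ref{prop:Cof-GP} in one direction and Propositions \ref{prop:GP-Cof1}/\ref{prop:GP-Cof2} in the other, with finite global dimension of $R$ supplying the hypothesis ${\rm pd}_RM<\infty$. Your syzygy argument for the reverse inequality $\mathrm{pd}_{RG}M\otimes_RB(G,R)\leq\mathrm{Gpd}_{RG}M$ (using that the $n$th syzygy of a module of Gorenstein projective dimension $n$ is Gorenstein projective, hence cofibrant by the first part, and that $B(G,R)$ is $R$-free so tensoring preserves exactness) is exactly the intended completion of the statement the paper leaves to the reader.
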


\begin{proof}
Invoking Proposition \ref{prop:Cof-GP}, it suffices to prove that for any $RG$-module $M$,
$\mathrm{pd}_{RG}M\otimes_{R}B(G, R) \leq \mathrm{Gpd}_{RG}M$. Since the inequality which is to be proved is obvious if
$\mathrm{Gpd}_{RG}M = \infty$, it only suffices to consider the case where $\mathrm{Gpd}_{RG}M = n$ is finite. Then,
there exists an exact sequence of $RG$-modules
$$0\longrightarrow M_n\longrightarrow \cdots\longrightarrow M_1\longrightarrow M_0\longrightarrow M\longrightarrow 0,$$
where $M_0, M_1, \cdots, M_n$ are Gorenstein projective; they are also cofibrant modules by
Propositions \ref{prop:GP-Cof1} and \ref{prop:GP-Cof2}. By applying $-\otimes_{R}B(G, R)$ to this sequence,
we infer that $\mathrm{pd}_{RG}M\otimes_{R}B(G, R) \leq n$ since $M_i\otimes_{R}B(G, R)$ are projective $RG$-modules.
 \end{proof}

In particular, for the trivial $RG$-module $R$ we have the following, which supports a question raised in \cite[Conjecture 3.4]{Bis21+} and gives an example to Theorem \ref{thm:fGcd}.

\begin{corollary}\label{cor:Gcd=pdB2}
$\mathrm{Gcd}_RG = \mathrm{pd}_{RG}B(G, R)< \infty$ if either of the following holds:
\begin{enumerate}
\item $R$ is a commutative Gorenstein regular ring and $G$ is a group of type $\Phi_R$;
\item $R$ is a commutative ring of finite global dimension and $G$ is an $\mathrm{LH}\mathfrak{F}$-group of type $FP_\infty$.
\end{enumerate}
\end{corollary}

\begin{proof}
If $G$ is of type $\Phi_R$, then by \cite[Lemma 3.4(ii)]{Ben97} we have $\mathrm{pd}_{RG}B(G, R)< \infty$, and the assertion is true by Corollary \ref{cor:Gcd=pdB1}. If $G$ is an $\mathrm{LH}\mathfrak{F}$-group of type $FP_\infty$, then by \cite[Theorem A.2]{ET22} we have $\mathrm{Gcd}_{\mathbb{Z}}G < \infty$. Moreover, by Corollaries \ref{cor:GcdZG} and \ref{cor:GP=Cof}
we have $\mathrm{pd}_{RG}B(G, R) = \mathrm{Gcd}_RG \leq \mathrm{Gcd}_{\mathbb{Z}}G < \infty$.
\end{proof}

\section{Model structure and Stable categories over group rings}

A model category \cite{Qui67} refers to a category with three specified classes of morphisms, called fibrations, cofibrations and weak equivalences, which satisfy a few axioms that are deliberately reminiscent of properties of topological spaces. The homotopy category associated to a model category is obtained by formally inverting the weak equivalences, while the objects are the same. We refer to \cite{Qui67, Hov99} for basic definitions and facts on model categories.

\begin{lemma}\label{lem:C}
Let $G$ be a group and $R$ be a commutative ring of coefficients. Then the subcategory of cofibrant $RG$-modules $\mathcal{C}of(RG)$ is a Frobenius category, whose projective and injective objects coincide and are precisely the projective $RG$-modules.
\end{lemma}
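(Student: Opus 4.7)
The plan is to equip $\mathcal{C}of(RG)$ with the natural exact structure inherited from $\mathrm{Mod}(RG)$, whose conflations are the short exact sequences of $RG$-modules all of whose terms are cofibrant. The only nontrivial axiom to verify is closure under extensions: given $0\to L\to M\to N\to 0$ with $L$ and $N$ cofibrant, applying the exact functor $-\otimes_R B(G,R)$ (exact because $B(G,R)$ is $R$-free by \cite[Lemma~3.4(i)]{Ben97}) yields a short exact sequence of $RG$-modules whose outer terms are projective, and this sequence splits, so $M\otimes_R B(G,R)$ is projective and $M\in \mathcal{C}of(RG)$. Closure under kernels of admissible epics and cokernels of admissible monics is then covered by Proposition~\ref{prop:cof}.

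I would next identify the projective and injective objects. Every projective $RG$-module $P$ belongs to $\mathcal{C}of(RG)$ by Lemma~\ref{lem:RG-tensor-hom}(2) applied to the diagonal module $P\otimes_R B(G,R)$. Since $\mathrm{Hom}_{RG}(P,-)$ is exact on $\mathrm{Mod}(RG)$, such a $P$ is projective in the exact category $\mathcal{C}of(RG)$. Moreover, by Proposition~\ref{prop:Cof-GP} every cofibrant module is Gorenstein projective, hence $\mathrm{Ext}^1_{RG}(N,P)=0$ for every $N\in\mathcal{C}of(RG)$, showing $P$ is also injective in the exact category. Enough projectives is then immediate: for any cofibrant $M$ choose a surjection $F\to M$ from a free $RG$-module $F$; the kernel is cofibrant by Proposition~\ref{prop:cof}, giving an admissible conflation $0\to K\to F\to M\to 0$. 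The converse for projectives drops out at once: if $P\in\mathcal{C}of(RG)$ is projective in the exact category, the conflation just constructed splits, and so $P$ is a direct summand of $F$ and therefore a projective $RG$-module.

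The step I expect to be the main obstacle is the construction of enough injectives, since it is the only point that uses the specific structure of $B(G,R)$ in an essential way. Here I would exploit the $R$-split short exact sequence $0\to R\to B(G,R)\to D\to 0$ with $D$ an $R$-free $RG$-module used in the proof of Proposition~\ref{prop:Cof-GP} (see \cite[Lemma~3.3]{CK97}). Tensoring over $R$ with an arbitrary cofibrant $M$ produces an exact sequence
\[
0\longrightarrow M\longrightarrow M\otimes_R B(G,R)\longrightarrow M\otimes_R D\longrightarrow 0.
\]
The middle term is a projective $RG$-module by the cofibrancy of $M$. The cokernel is cofibrant because
\[
(M\otimes_R D)\otimes_R B(G,R)\;\cong\;(M\otimes_R B(G,R))\otimes_R D
\]
as diagonal $RG$-modules, and the right-hand side is the diagonal tensor of a projective $RG$-module with an $R$-projective, hence projective by Lemma~\ref{lem:RG-tensor-hom}(2). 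This gives the required injective conflation. Conversely, if $I\in\mathcal{C}of(RG)$ is injective in the exact category, then the conflation above splits and displays $I$ as a direct summand of the projective $RG$-module $I\otimes_R B(G,R)$, so $I$ is itself a projective $RG$-module. Combining all steps, $\mathcal{C}of(RG)$ is Frobenius with projective--injective class equal to the class of projective $RG$-modules.
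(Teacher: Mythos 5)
Your proof is correct and takes essentially the same route as the paper's (much terser) argument: cofibrant modules are Gorenstein projective, so projective $RG$-modules are the projective--injective objects, and the closure properties of Proposition \ref{prop:cof} furnish the exact structure and enough projectives. The one step you spell out that the paper leaves implicit is the enough-injectives conflation $0\to M\to M\otimes_R B(G,R)\to M\otimes_R D\to 0$, which is precisely the first segment of the sequence $(\flat)$ constructed in the proof of Proposition \ref{prop:Cof-GP}, so your argument is a complete and faithful elaboration rather than a genuinely different approach.
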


\begin{proof}
By Proposition \ref{prop:Cof-GP}, for any projective $RG$-module $P$ and any cofibrant module $M$, we have ${\rm Ext}_{RG}^i(M, P) = 0$, and then projective $RG$-modules are the projective-injective objects with respect to $\mathcal{C}of(RG)$. Consequently, by combing with the properties of cofibrant modules in Proposition \ref{prop:cof}, we get that $\mathcal{C}of(RG)$ is a Frobenius category.
\end{proof}

Let $A$ be a ring. For any $A$-modules $M$ and $N$, we define $\underline{{\rm Hom}}_A(M, N)$ to be the quotient of ${\rm Hom}_A(M, N)$ by the additive subgroup consisting of homomorphisms which factor through a projective module. For the Frobenius category $\mathcal{C}of(RG)$ of cofibrant $RG$-modules, the stable category
$\underline{\mathcal{C}of}(RG)$ is a triangulated category with objects being cofibrant modules but morphisms being $\underline{{\rm Hom}}_{RG}(M, N)$. 

We refer to \cite{Bun10} for the notion of exact category, which is commonly attributed to Quillen. By \cite[Definition 2.2]{Gil11}, an additive category is weakly idempotent complete if every split monomorphism has a cokernel and every split epimorphism has a kernel. The following is immediate.

\begin{lemma}\label{lem:F}
Let $G$ be a group and $R$ be a commutative ring of coefficients. Let $\mathcal{F}ib(RG)$ be the subcategory of $RG$-modules $M$ such that ${\rm pd}_{RG}M\otimes_{R} B(G, R)< \infty$. Then $\mathcal{F}ib(RG)$ is a weakly idempotent complete exact category. The modules in $\mathcal{F}ib(RG)$ are called fibrant $RG$-modules.
\end{lemma}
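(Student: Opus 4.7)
The plan is to equip $\mathcal{F}ib(RG)$ with the exact structure inherited from the ambient abelian category $\mathrm{Mod}(RG)$: a short exact sequence $0\to L\to M\to N\to 0$ of $RG$-modules is declared admissible precisely when each of $L, M, N$ lies in $\mathcal{F}ib(RG)$. By the standard principle that a full additive subcategory of an abelian category which is closed under extensions becomes an exact category in Quillen's sense with this inherited structure (see, e.g., the exposition \cite{Bun10} cited above), the problem reduces to verifying a couple of closure properties for $\mathcal{F}ib(RG)$ inside $\mathrm{Mod}(RG)$, plus closure under direct summands for weak idempotent completeness.

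Additivity of $\mathcal{F}ib(RG)$ is immediate: the zero module is fibrant, and for $M_1, M_2 \in \mathcal{F}ib(RG)$ the module $(M_1\oplus M_2)\otimes_R B(G,R)$ decomposes as $(M_1\otimes_R B(G,R))\oplus(M_2\otimes_R B(G,R))$, whose projective dimension is the maximum of two finite numbers, hence finite. The key step is closure under extensions. Given a short exact sequence $0\to L\to M\to N\to 0$ of $RG$-modules with $L, N$ fibrant, I would tensor with $B(G,R)$ over $R$. Since $B(G,R)$ is free as an $R$-module by \cite[Lemma 3.4]{Ben97}, the functor $-\otimes_R B(G,R)$ is exact, so that the sequence
$$0\to L\otimes_R B(G,R)\to M\otimes_R B(G,R)\to N\otimes_R B(G,R)\to 0$$
is exact as $RG$-modules (with the diagonal action). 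Finite projective dimension is preserved under extensions, so $M\otimes_R B(G,R)$ has finite $RG$-projective dimension and $M\in\mathcal{F}ib(RG)$.

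For weak idempotent completeness in the sense of \cite[Definition 2.2]{Gil11}, I would check that $\mathcal{F}ib(RG)$ is closed under direct summands inside $\mathrm{Mod}(RG)$. If $B$ is fibrant and $B\cong A\oplus C$, then $A\otimes_R B(G,R)$ is a direct summand of the module $B\otimes_R B(G,R)$, and finite projective dimension descends to direct summands; hence both $A$ and $C$ lie in $\mathcal{F}ib(RG)$. Applied to a split monomorphism (respectively, split epimorphism) between fibrant modules, this shows that its cokernel (respectively, kernel) exists in $\mathrm{Mod}(RG)$ as a direct summand of the codomain (respectively, domain) and lies in $\mathcal{F}ib(RG)$, which is exactly the content of weak idempotent completeness.

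I do not anticipate a real obstacle here: the lemma is essentially a bookkeeping exercise. The only nontrivial input is the $R$-freeness of $B(G,R)$, used to make $-\otimes_R B(G,R)$ exact and hence to transfer the extension closure of ``modules of finite $RG$-projective dimension'' back to $\mathcal{F}ib(RG)$; the remaining exact-category axioms are automatic from the reduction in the abelian ambient.
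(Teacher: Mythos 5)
Your proposal is correct and fills in exactly the routine verification the paper leaves implicit (the paper offers no argument beyond ``immediate by the very definition''). The three checks you perform --- closure under finite direct sums, closure under extensions via exactness of $-\otimes_R B(G,R)$ coming from the $R$-freeness of $B(G,R)$, and closure under direct summands to get weak idempotent completeness --- are precisely what is needed, and each is carried out correctly.
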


Let $\mathcal{A}$ be an exact category with enough projectives and enough injectives. The Yoneda Ext bifunctor is denoted by $\mathrm{Ext}_{\mathcal{A}}(-, -)$. A pair of classes $(\mathcal{X}, \mathcal{Y})$ in $\mathcal{A}$ is a \emph{cotorsion pair} provided that $\mathcal{X} =  {^\perp}\mathcal{Y}$ and $\mathcal{Y} = \mathcal{X}^{\perp}$, where the left orthogonal class $^{\perp}\mathcal{Y}$ consists of $X$ such that $\mathrm{Ext}^{\geq 1}_{\mathcal{A}}(X, Y) = 0$ for all $Y\in \mathcal{Y}$, and the right orthogonal class $\mathcal{X}^{\perp}$ is defined similarly. The cotorsion pair $(\mathcal{X}, \mathcal{Y})$ is said to be \emph{complete} if for any object $M\in \mathcal{A}$, there exist short exact sequences $0\rightarrow Y\rightarrow X \rightarrow M \rightarrow 0$ and $0\rightarrow M\rightarrow Y' \rightarrow X' \rightarrow 0$ with $X, X'\in \mathcal{X}$ and $Y, Y'\in \mathcal{Y}$.

It is not usually trivial to check that a category has a model category structure, see for example \cite[Section 10]{Ben97}. However, thanks to \cite[Theorem 2.2]{Hov02} and \cite[Theorem 3.3]{Gil11}, we have a correspondence between complete cotorsion pairs and the model structure. This brings us a convenience to have the following.

\begin{theorem}\label{thm:model}
Let $G$ be a group and $R$ be a commutative ring of coefficients. Let $\mathcal{W}$ be the subcategory formed by $RG$-modules of finite projective dimension. Then there are complete cotorsion pairs
$(\mathcal{C}of\cap \mathcal{W}, \mathcal{F}ib)$ and $(\mathcal{C}of, \mathcal{W}\cap \mathcal{F}ib)$ in the weakly idempotent complete exact category $\mathcal{F}ib(RG)$.

Consequently, there is a model structure on the category $\mathcal{F}ib(RG):$
\begin{itemize}
\item the cofibrations  (trivial cofibrations) are monomorphisms whose cokernels are cofibrant $RG$-modules (projective $RG$-modules).
\item the fibrations (trivial fibrations) are epimorphisms (with kernel being of finite projective dimension).
\item the weak equivalences are morphisms which factor as a trivial cofibration followed by a trivial fibration.
\end{itemize}
\end{theorem}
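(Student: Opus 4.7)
The plan is to invoke the Hovey--Gillespie correspondence between model structures on weakly idempotent complete exact categories and so-called Hovey triples, namely \cite[Theorem 2.2]{Hov02} together with its exact-category extension \cite[Theorem 3.3]{Gil11}. Such a triple in an exact category $\mathcal{A}$ consists of a thick subcategory $\mathcal{W}$ together with two complete cotorsion pairs $(\widetilde{\mathcal{C}},\mathcal{F})$ and $(\mathcal{C},\widetilde{\mathcal{F}})$ satisfying $\widetilde{\mathcal{C}}=\mathcal{C}\cap\mathcal{W}$ and $\widetilde{\mathcal{F}}=\mathcal{F}\cap\mathcal{W}$, and it produces a model structure whose cofibrations (resp.\ trivial cofibrations) are the admissible monics with cokernel in $\mathcal{C}$ (resp.\ $\widetilde{\mathcal{C}}$), and whose fibrations (resp.\ trivial fibrations) are the admissible epics with kernel in $\mathcal{F}$ (resp.\ $\widetilde{\mathcal{F}}$). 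The triple I would use is $(\mathcal{C}of,\mathcal{W},\mathcal{F}ib)$ inside the ambient category $\mathcal{F}ib(RG)$, so that $\mathcal{F}=\mathcal{F}ib$ is the whole category and every admissible epic is a fibration; matching against the desired description gives $\mathcal{C}=\mathcal{C}of$, $\widetilde{\mathcal{F}}=\mathcal{W}$, while $\widetilde{\mathcal{C}}=\mathcal{C}of\cap\mathcal{W}$ will be identified with the projective $RG$-modules.

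The preliminary identifications are easy to record. Since $B(G,R)$ is $R$-free, tensoring a finite projective $RG$-resolution with $B(G,R)$ remains a finite projective resolution by Lemma \ref{lem:RG-tensor-hom}(2); hence $\mathcal{W}\subseteq\mathcal{F}ib$, so $\mathcal{W}\cap\mathcal{F}ib=\mathcal{W}$, and the thickness of $\mathcal{W}$ in $\mathcal{F}ib$ is the standard two-out-of-three property for finite projective dimension. Any $M\in\mathcal{C}of\cap\mathcal{W}$ is Gorenstein projective by Proposition \ref{prop:Cof-GP} and has finite projective dimension, hence is projective by a classical fact of Gorenstein homological algebra, so $\mathcal{C}of\cap\mathcal{W}$ coincides with the class of projective $RG$-modules. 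For $M\in\mathcal{F}ib$ and $W\in\mathcal{W}$, the Yoneda Ext computed in $\mathcal{F}ib$ agrees with the one in $\mathrm{Mod}(RG)$, because the middle term of any short exact sequence of $RG$-modules with outer terms $M$ and $W$ remains fibrant by two-out-of-three for $\mathcal{W}$ after tensoring with $B(G,R)$.

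The first cotorsion pair $(\mathrm{proj}(RG),\mathcal{F}ib)$ is formal: orthogonality is automatic, and completeness reduces to the existence of enough projectives in $\mathcal{F}ib$, since a surjection $P\twoheadrightarrow M$ from a projective $RG$-module onto $M\in\mathcal{F}ib$ has kernel again in $\mathcal{F}ib$ (tensor with $B(G,R)$ and compare projective dimensions). For the second cotorsion pair $(\mathcal{C}of,\mathcal{W})$, orthogonality $\mathcal{C}of\subseteq{}^{\perp}\mathcal{W}$ follows from Gorenstein projectivity of cofibrant modules, while the reverse characterization is \cite[Lemma 4.5(ii)]{Ben97} as invoked in the proof of Proposition \ref{prop:cof}. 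For completeness, given $M\in\mathcal{F}ib$ with $n=\mathrm{pd}_{RG}(M\otimes_R B(G,R))$, take a projective resolution $0\to K_n\to P_{n-1}\to\cdots\to P_0\to M\to 0$; tensoring with $B(G,R)$ exhibits $K_n\otimes_R B(G,R)$ as an $n$-th syzygy of $M\otimes_R B(G,R)$ in a projective resolution, so $K_n$ is cofibrant. An iterated pushout procedure, in the spirit of Holm's construction of Gorenstein projective approximations and using the closure properties in Proposition \ref{prop:cof}, then assembles a short exact sequence $0\to W\to C\to M\to 0$ with $C$ cofibrant and $\mathrm{pd}_{RG}W\leq n-1$; this is the special cofibrant precover. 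Salce's trick in $\mathcal{F}ib$, applied to the $RG$-embedding $M\hookrightarrow M\otimes_R B(G,R)$ whose codomain lies in $\mathcal{W}$ (and whose cokernel lies in $\mathcal{F}ib$), then yields the complementary special $\mathcal{W}$-preenvelope $0\to M\to W'\to C'\to 0$.

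With both complete cotorsion pairs established, \cite[Theorem 2.2]{Hov02} and \cite[Theorem 3.3]{Gil11} produce the claimed model structure on $\mathcal{F}ib(RG)$ with the stated cofibrations, fibrations, and weak equivalences. The hard step is the completeness of $(\mathcal{C}of,\mathcal{W})$: assembling the short exact sequence $0\to W\to C\to M\to 0$ from the cofibrant syzygy $K_n$ in such a way that $C$ is cofibrant (and not merely Gorenstein projective) requires invoking the Ext-vanishing hypothesis of Proposition \ref{prop:cof} at each pushout step, while simultaneously keeping every intermediate module inside $\mathcal{F}ib$. The construction in \cite[Section 10]{Ben97} provides the guiding template, and the $B(G,R)$-tensor criterion from Proposition \ref{prop:Cof-GP} is what makes the argument go through in this relative setting.
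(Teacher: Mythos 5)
Your overall architecture is the same as the paper's: invoke the Hovey--Gillespie correspondence, identify $\mathcal{C}of\cap\mathcal{W}$ with the projective $RG$-modules, and produce the two complete cotorsion pairs $(\mathcal{P},\mathcal{F}ib)$ and $(\mathcal{C}of,\mathcal{W})$ inside $\mathcal{F}ib(RG)$. Where you genuinely diverge is in the completeness of $(\mathcal{C}of,\mathcal{W})$, and you run the two halves in the opposite order. The paper first builds the special $\mathcal{W}$-preenvelope: since $M\in\mathcal{F}ib$ has finite Gorenstein projective dimension by Proposition \ref{prop:Cof-GP}, a single pushout as in Lemma \ref{lem:SplitMonic} yields $0\to M\to N\to L\to 0$ with ${\rm pd}_{RG}N<\infty$ and $L$ Gorenstein projective, and then Lemma \ref{lem:cof} upgrades $L$ to cofibrant for free because $L$ stays in $\mathcal{F}ib$; the special $\mathcal{C}of$-precover is then obtained by a pullback against a projective cover of $N$. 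You instead build the precover first, by rerunning Holm's iterated-pushout construction directly in the cofibrant world starting from the cofibrant syzygy $K_n$, and then recover the preenvelope by Salce's pullback along $M\hookrightarrow M\otimes_R B(G,R)$. Both routes work, but the paper's use of Lemma \ref{lem:cof} makes the approximation step essentially cost-free, whereas your direct route leaves the hard step (as you acknowledge) as a sketch: to carry it out you must check that the cofibrant module $K_n$ admits a right projective coresolution with cofibrant cokernels --- this is exactly the coresolution $(\flat)$ constructed in the proof of Proposition \ref{prop:Cof-GP} --- and that the extension term produced by the comparison of resolutions is again cofibrant, using the closure properties of Proposition \ref{prop:cof}.

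Two smaller points need attention. First, you verify $\mathcal{C}of={}^{\perp}\mathcal{W}$ but never address the other half of the cotorsion-pair condition, $\mathcal{C}of^{\perp}\subseteq\mathcal{W}$; the paper proves it directly by splitting the sequence $0\to M\to N\to L\to 0$ against $M\in\mathcal{C}of^{\perp}$, and in your setup it follows formally from the special $\mathcal{W}$-preenvelope you construct (split it, so $M$ is a summand of $W'\in\mathcal{W}$), but this should be said. Second, your remark that Yoneda Ext in $\mathcal{F}ib$ agrees with ${\rm Ext}_{RG}$ for the relevant pairs of objects is a point the paper glosses over but that is indeed needed for the orthogonality classes computed inside $\mathcal{F}ib$ to be what you claim; your justification via two-out-of-three after tensoring with $B(G,R)$ is correct.
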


\begin{proof}
It is clear that $\mathcal{P}\subseteq \mathcal{C}of\cap \mathcal{W}$, that is, all projective $RG$-modules are included in  $\mathcal{C}of\cap \mathcal{W}$. We infer that $\mathcal{C}of\cap \mathcal{W}\subseteq \mathcal{P}$ since any cofibrant module is Gorenstein projective, and projective dimension of any Gorenstein projective module is either zero or infinity. Hence, $\mathcal{C}of\cap \mathcal{W} = \mathcal{P}$.

For any $P\in \mathcal{P}$ and any $RG$-module $M$, it is clear that ${\rm Ext}^{\geq 1}_{RG}(P, M) = 0$, and furthermore, we have $\mathcal{P}^{\perp} = \mathcal{F}ib$ and $\mathcal{P} \subseteq {^{\perp}\mathcal{F}ib}$ by noting that ``$\perp$'' is only calculated inside $\mathcal{F}ib$. Let $M\in {^{\perp}\mathcal{F}ib}$. There is an exact sequence $0\rightarrow K\rightarrow P\rightarrow M\rightarrow 0$ in $\mathcal{F}ib$, where $P$ is a projective $RG$-module. Noting that
${\rm Ext}^1_{RG}(M, K) = 0$, we deduce that the sequence is split, and hence, as a direct summand of $P$, $M$ is projective. This implies the inclusion ${^{\perp}\mathcal{F}ib}\subseteq \mathcal{P}$,  and consequently, we obtain a cotorsion pair $(\mathcal{C}of \cap \mathcal{W}, \mathcal{F}ib) = (\mathcal{P}, \mathcal{F}ib)$. The completeness of this cotorsion pair is easy to see.

Next, we show that $(\mathcal{C}of, \mathcal{W}\cap \mathcal{F}ib) = (\mathcal{C}of, \mathcal{W})$ is a cotorsion pair. Since every cofibrant module is Gorenstein projective,  $\mathcal{C}of \subseteq {^{\perp}\mathcal{W}}$ and $\mathcal{W}\subseteq \mathcal{C}of^{\perp}$ hold immediately. For any $M\in {^{\perp}\mathcal{W}}$, we have $M\in \mathcal{C}of$  by Lemma \ref{lem:cof} since we only consider objects in $\mathcal{F}ib$. Hence, ${^{\perp}\mathcal{W}} \subseteq \mathcal{C}of$, and then
$\mathcal{C}of = {^{\perp}\mathcal{W}}$. Let $M$ be any object in $\mathcal{C}of^{\perp}$. Since we also have $M\in \mathcal{F}ib$, it follows from
Proposition \ref{prop:Cof-GP} that ${\rm Gpd}_{RG}M \leq {\rm pd}_{RG}M\otimes_{R}B(G, R)$ is finite. Assume ${\rm Gpd}_{RG}M = n$. By an argument analogous to that of Lemma \ref{lem:SplitMonic}, we have an exact sequence $0\rightarrow M\rightarrow N\rightarrow L\rightarrow 0$ from a pushout diagram, where $L$ is Gorenstein projective and
${\rm pd}_{RG}N = n$. Then, we infer from Lemma \ref{lem:cof} that $L$ is cofibrant by noting $L\in \mathcal{F}ib$. Hence, ${\rm Ext}_{RG}^1(L, M) = 0$ for  $M\in \mathcal{C}of^{\perp}$. Then, the above sequence is split, and ${\rm pd}_{RG}M\leq {\rm pd}_{RG}N = n$. This implies that $\mathcal{C}of^{\perp} \subseteq \mathcal{W}$, and finally, $(\mathcal{C}of, \mathcal{W})$ is a cotorsion pair.

Let $M$ be any $RG$-module in $\mathcal{F}ib$. By the above argument we have an exact sequence $0\rightarrow M\rightarrow N\rightarrow L\rightarrow 0$ with $N\in \mathcal{W}$ and $L\in \mathcal{C}of$. Moreover, since ${\rm Gpd}_{RG}M \leq {\rm pd}_{RG}M\otimes_{R}B(G, R)$ is finite, it follows from \cite[Theorem 2.10]{Hol04} that there exists an exact sequence
$0\rightarrow K\rightarrow H\rightarrow M\rightarrow 0$ in $\mathcal{F}ib$  where $K\in \mathcal{W}$ and $H$ is a Gorenstein projective $RG$-module. By Lemma \ref{lem:cof}, $H$ is also a cofibrant module. Hence, the completeness of the cotorsion pair
$(\mathcal{C}of, \mathcal{W})$ follows.

Consequently, by using \cite[Theorem 3.3]{Gil11} we have a model structure on $\mathcal{F}ib$ as stated above, which is corresponding to the triple of classes of $RG$-modules $(\mathcal{C}of, \mathcal{W}, \mathcal{F}ib)$. The triple is usually referred to as a Hovey triple, since such a correspondence was obtained by Hovey in \cite[Theorem 2.2]{Hov02}.
\end{proof}

Let $M$ be an object in a model category. Recall that $M$ is called {\em cofibrant} if $0\rightarrow M$ is a cofibration, and it is called {\em fibrant} if $M\rightarrow 0$ is a fibration. For the Frobenius category $\mathcal{C}of$, whose objects are both cofibrant and fibrant, there is a stable category. Moreover, for the model category $\mathcal{F}ib$ there is an associated homotopy category $\mathrm{Ho}(\mathcal{F}ib)$, which is obtained by formally inverting weak equivalences, that is, the localization of $\mathcal{F}ib$ with respect to the class of weak equivalences.

The following is immediate from a fundamental result about model categories, see for example \cite[Theorem 1.2.10]{Hov99}.

\begin{corollary}\label{cor:equ1}
Let $G$ be a group and $R$ be a commutative ring of coefficients. The homotopy category ${\rm Ho}(\mathcal{F}ib)$ is triangle equivalent to the stable category $\underline{\mathcal{C}of}(RG)$.
\end{corollary}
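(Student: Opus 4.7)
The plan is to invoke the standard dictionary between Hovey triples on a weakly idempotent complete exact category and exact model structures, and to observe that in our situation the bifibrant objects are precisely the cofibrant $RG$-modules, while the ambient Frobenius structure on $\mathcal{C}of(RG)$ from Lemma \ref{lem:C} supplies the triangulated structure on the quotient.

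First, I would identify the cofibrant and fibrant objects of the model structure on $\mathcal{F}ib$ produced by Theorem \ref{thm:model}. Since a cofibration is by definition a monomorphism whose cokernel is a cofibrant $RG$-module, the sequence $0\rightarrow M$ is a cofibration exactly when $M\in\mathcal{C}of(RG)$, so the cofibrant objects of the model category coincide with $\mathcal{C}of(RG)$. Dually, every epimorphism is a fibration, and $M\rightarrow 0$ is automatically an epimorphism, so every object of $\mathcal{F}ib$ is fibrant. Hence the class of bifibrant objects is exactly $\mathcal{C}of(RG)$.

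Next, I would appeal to the general theorem that the homotopy category of such an exact model structure is the quotient of the full subcategory of bifibrant objects by the model-theoretic homotopy relation; see \cite[Theorem 1.2.10]{Hov99} and \cite[Proposition 4.4]{Gil11}. The key ingredient needed to identify this quotient with the Frobenius stable category is that the trivially cofibrant objects agree with the projective-injective objects of the Frobenius category $\mathcal{C}of(RG)$. But in the proof of Theorem \ref{thm:model} we already computed $\mathcal{C}of\cap\mathcal{W}=\mathcal{P}$, and by Lemma \ref{lem:C} these are precisely the projective-injective objects of $\mathcal{C}of(RG)$. Under this identification, the cylinder/path object relation on bifibrant objects reduces to ``factoring through a projective $RG$-module'', which is the defining relation of the stable category $\underline{\mathcal{C}of}(RG)$.

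Finally, I would match the two triangulations. On one side, the Frobenius category $\mathcal{C}of(RG)$ equips $\underline{\mathcal{C}of}(RG)$ with the Happel triangulation, using short exact sequences in $\mathcal{C}of(RG)$ as conflations and the cosyzygy as suspension. On the other side, ${\rm Ho}(\mathcal{F}ib)$ inherits cofibre triangles from the pointed model structure, with suspension given by the standard mapping cone construction. For an exact model structure of Hovey type the two descriptions coincide canonically: the mapping cone of a map between bifibrant objects can be taken to be a cosyzygy in $\mathcal{C}of(RG)$, and the comparison is natural. This is carried out abstractly in \cite[Section 4]{Gil11}, and it yields the desired triangle equivalence ${\rm Ho}(\mathcal{F}ib)\simeq \underline{\mathcal{C}of}(RG)$.

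The main obstacle is not conceptual but organizational: verifying that the model-category homotopy relation on bifibrant objects collapses to the stable-equivalence relation and that the two candidate suspensions are naturally isomorphic. Once one accepts the Hovey/Gillespie correspondence this is automatic, so the corollary reduces essentially to the identification of classes carried out in the previous paragraphs.
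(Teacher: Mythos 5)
Your proposal is correct and follows essentially the same route as the paper, which simply cites the fundamental theorem of model categories (\cite[Theorem 1.2.10]{Hov99}) applied to the model structure of Theorem \ref{thm:model}; you have merely spelled out the details (bifibrant objects are exactly $\mathcal{C}of(RG)$, the homotopy relation collapses to factoring through $\mathcal{C}of\cap\mathcal{W}=\mathcal{P}$, and the Gillespie--Hovey correspondence matches the two triangulations). Your identification of the classes and of the homotopy relation is accurate, so nothing further is needed.
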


Note that weak equivalences are crucial in the model category. We have the following characterization, which might be of independent interest.

\begin{proposition}\label{prop:WEqu}
Let $G$ be a group and $R$ a commutative ring of coefficients. For any cofibrant $RG$-modules $M$ and $N$, the following are equivalent:
\begin{enumerate}
\item There is a weak equivalence between $M$ and $N$;
\item $M$ and $N$ are isomorphic in the stable category $\underline{\mathcal{C}of}(RG)$;
\item There is an isomorphism $M\oplus P\cong N\oplus Q$ in the category ${\rm Mod}(RG)$ of $RG$-modules, for some projective modules $P$ and $Q$.
\end{enumerate}
\end{proposition}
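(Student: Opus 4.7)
The plan is to establish the cycle $(1)\Leftrightarrow(2)$, $(3)\Rightarrow(2)$, and then $(2)\Rightarrow(3)$, the latter being the content-bearing direction.

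For $(1)\Leftrightarrow(2)$, I will first observe that every cofibrant $RG$-module $M$ is bifibrant in the model structure of Theorem~\ref{thm:model}: the map $0\to M$ is a monomorphism with cokernel $M\in\mathcal{C}of(RG)$, hence a cofibration, and $M\to 0$ is always an epimorphism, hence a fibration. The standard fundamental theorem of model categories (e.g.\ \cite[Theorem~1.2.10]{Hov99}) then says that two bifibrant objects become isomorphic in the homotopy category precisely when they are related by a weak equivalence. Combining this with $\mathrm{Ho}(\mathcal{F}ib)\simeq\underline{\mathcal{C}of}(RG)$ from Corollary~\ref{cor:equ1} yields the equivalence of (1) and (2). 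The implication $(3)\Rightarrow(2)$ is immediate from Lemma~\ref{lem:C}, since projective $RG$-modules are the zero objects of $\underline{\mathcal{C}of}(RG)$, so $M\oplus P\cong N\oplus Q$ descends to $M\cong N$ in the stable category.

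The main step is $(2)\Rightarrow(3)$, which I will prove using the mapping-cone construction inside the Frobenius category $\mathcal{C}of(RG)$. To set this up, I need that $\mathcal{C}of(RG)$ has enough projective-injectives in its exact structure. Enough projectives is immediate from closure of $\mathcal{C}of(RG)$ under kernels of epimorphisms (Proposition~\ref{prop:cof}). For enough injectives, given any $M\in\mathcal{C}of(RG)$, Proposition~\ref{prop:Cof-GP} produces an exact sequence $0\to M\to P\to L\to 0$ with $P$ projective and $L$ Gorenstein projective; tensoring with $B(G,R)$ gives $\mathrm{pd}_{RG}(L\otimes_R B(G,R))\leq 1$, and then Lemma~\ref{lem:cof} promotes $L$ to a cofibrant module.

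Now assume $\underline{f}\colon M\to N$ is an isomorphism in $\underline{\mathcal{C}of}(RG)$, with representative $f\in\mathrm{Hom}_{RG}(M,N)$. Choose an admissible monomorphism $\iota\colon M\hookrightarrow I$ in $\mathcal{C}of(RG)$ with $I$ projective, and form the mapping cone $C(f)$ as the pushout of $f$ along $\iota$, obtaining an admissible short exact sequence
$$0\longrightarrow M \longrightarrow N\oplus I\longrightarrow C(f)\longrightarrow 0$$
in $\mathcal{C}of(RG)$, together with a distinguished triangle $M\to N\to C(f)\to \Sigma M$ in $\underline{\mathcal{C}of}(RG)$. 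Since $\underline{f}$ is invertible, $C(f)$ is a zero object of $\underline{\mathcal{C}of}(RG)$; concretely, $\mathrm{id}_{C(f)}$ factors through a projective $RG$-module, which forces $C(f)$ itself to be a projective $RG$-module. Consequently the displayed short exact sequence splits in $\mathrm{Mod}(RG)$, yielding $M\oplus C(f)\cong N\oplus I$, which is (3) with $P=C(f)$ and $Q=I$. The main subtlety is checking that the triangulated structure on $\underline{\mathcal{C}of}(RG)$ is robust enough to support the mapping-cone argument and the identification of zero objects with projective modules; but all the necessary ingredients are already in place via Lemmas~\ref{lem:C}, \ref{lem:cof} and Propositions~\ref{prop:Cof-GP}, \ref{prop:cof}.
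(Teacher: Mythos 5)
Your proof is correct, but it is organized differently from the paper's and does more work in the key direction. The paper closes the cycle as $(1)\Rightarrow(2)\Rightarrow(3)\Rightarrow(1)$: it gets $(1)\Rightarrow(2)$ from the triangle equivalence of Corollary \ref{cor:equ1}, quotes $(2)\Rightarrow(3)$ as a standard fact about stable categories of Frobenius categories (citing \cite[Lemma 1.1]{CZ07}), and proves $(3)\Rightarrow(1)$ by a direct model-categorical construction: the isomorphism $M\oplus P\to N\oplus Q$ is a weak equivalence, the inclusion $M\to M\oplus P$ is a trivial cofibration, the projection $N\oplus Q\to N$ is a trivial fibration, and concatenating gives the weak equivalence. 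You instead prove $(2)\Rightarrow(1)$ via the fundamental theorem on bifibrant objects, and you reprove the cited Frobenius-category fact from scratch with the mapping-cone construction: form $0\to M\to N\oplus I\to C(f)\to 0$, note that invertibility of $\underline{f}$ forces $\mathrm{id}_{C(f)}$ to factor through a projective, hence $C(f)$ is a summand of a projective and the sequence splits. That argument is sound --- the only point worth making explicit is that $C(f)$ lies in $\mathcal{C}of(RG)$, which follows from the extension-closure in Proposition \ref{prop:cof} applied to $0\to N\to C(f)\to\Sigma M\to 0$, and that the Frobenius structure you need (enough projective-injectives, projectives closed under summands) is already packaged in Lemma \ref{lem:C}, so your re-derivation of enough injectives is redundant. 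The trade-off: your version is more self-contained and makes the triangulated mechanics visible, while the paper's route is shorter and, in $(3)\Rightarrow(1)$, exhibits the weak equivalence concretely as a composite of a trivial cofibration and a trivial fibration rather than extracting it abstractly from an isomorphism in the homotopy category.
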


\begin{proof}
By Corollary \ref{cor:equ1}, $(1)\Rightarrow (2)$ is immediate. In general, for any stable category of a Frobenius category, $(2)\Rightarrow (3)$ follows; see e.g. \cite[Lemma 1.1]{CZ07}.

$(3)\Longrightarrow (1)$ Note that the isomorphism $M\oplus P\rightarrow  N\oplus Q$ is a weak equivalence in the model category $\mathcal{F}ib$. Moreover, the injection $M\rightarrow M\oplus P$ is a trivial cofibration, and the projection $N\oplus Q\rightarrow N$ is a trivial fibration. By the concatenation of these maps, we get a weak equivalence between $M$ and $N$.
\end{proof}

For any module $M$, we define $\Omega(M)$ to be the kernel of a surjective homomorphism from a projective module onto $M$. For any $RG$-modules $M$ and $N$, there is a natural homomorphism $\underline{{\rm Hom}}_{RG}(M, N) \rightarrow \underline{{\rm Hom}}_{RG}(\Omega(M), \Omega(N))$. The complete cohomology is defined by
$\widehat{{\rm Ext}}_{RG}^n(M, N) = \mathop{\underrightarrow{\mathrm{lim}}}\limits_i\underline{{\rm Hom}}_{RG}(\Omega^{n+i}(M), \Omega^i(N))$.

We have the following definition, which generalizes the one in \cite[Section 8]{Ben97} by removing the countably presented condition for modules.

\begin{definition}\label{def:stab}
Let ${\rm StMod}(RG)$ be the stable module category with all fibrant $RG$-modules as objects, and morphisms for any objects $M$ and $N$ given by complete cohomology of degree zero, that is,
$${\rm Hom}_{{\rm StMod}(RG)}(M, N) = \mathop{\underrightarrow{\mathrm{lim}}}\limits_i\underline{{\rm Hom}}_{RG}(\Omega^i(M), \Omega^i(N)).$$
\end{definition}

Note that a map in ${\rm Hom}_{{\rm StMod}(RG)}(M, N)$ might not correspond to any map in ${\rm Hom}_{RG}(M, N)$, and hence, this definition can be difficult to work with. However, for cofibrant modules, we have the following.

\begin{lemma}\label{lem:hom}
Let $M$ and $N$ be cofibrant $RG$-modules. There is an isomorphism $$\underline{{\rm Hom}}_{RG}(M, N) \cong \underline{{\rm Hom}}_{RG}(\Omega(M), \Omega(N)).$$
\end{lemma}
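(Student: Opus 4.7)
The plan is to identify both Hom-sets in the statement with morphism sets in the stable category of the Frobenius category $\mathcal{C}of(RG)$ from Lemma \ref{lem:C}, and then invoke the standard fact that the syzygy functor is an auto-equivalence on any such stable category. The natural map appearing in the statement is precisely the one induced by syzygy.

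First I would verify that $\mathcal{C}of(RG)$ is closed under taking (ambient) syzygies and cosyzygies. For the syzygy, given cofibrant $M$ and a short exact sequence $0\to \Omega(M)\to P\to M\to 0$ with $P$ a projective $RG$-module, Proposition \ref{prop:cof} applied with $N:=M$ and middle term $P$ (both cofibrant) forces $\Omega(M)$ to be cofibrant. For the cosyzygy, since cofibrant implies Gorenstein projective (Proposition \ref{prop:Cof-GP}), Lemma \ref{lem:Holm} yields a short exact sequence $0\to M\to P'\to \Omega^{-1}(M)\to 0$ with $P'$ projective and $\Omega^{-1}(M)$ Gorenstein projective; since Gorenstein projective modules satisfy $\mathrm{Ext}^1_{RG}(\Omega^{-1}(M),Q)=0$ for every $Q$ of finite projective dimension, the last clause of Proposition \ref{prop:cof} then gives that $\Omega^{-1}(M)$ is cofibrant.

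Next, since the projective-injective objects of the Frobenius category $\mathcal{C}of(RG)$ coincide with the projective $RG$-modules (Lemma \ref{lem:C}), the relation ``factors through a projective module'' is unchanged whether interpreted inside $\mathrm{Mod}(RG)$ or inside $\mathcal{C}of(RG)$. Hence for cofibrant $M,N$ the group $\underline{\mathrm{Hom}}_{RG}(M,N)$ defined in the paper is literally the Hom-set of $\underline{\mathcal{C}of}(RG)$, and similarly for $\Omega(M),\Omega(N)$ by the closure established above.

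Finally I would invoke the general result (see Happel \cite{Hap88}) that in the stable category of any Frobenius category the syzygy functor descends to an auto-equivalence, with quasi-inverse supplied by the cosyzygy $\Omega^{-1}$. Applied to $\mathcal{C}of(RG)$ this immediately yields the desired natural isomorphism $\underline{\mathrm{Hom}}_{RG}(M,N)\cong \underline{\mathrm{Hom}}_{RG}(\Omega(M),\Omega(N))$. The only real obstacle is the closure of $\mathcal{C}of(RG)$ under cosyzygies, which requires the Ext-vanishing hypothesis of Proposition \ref{prop:cof}; this is handled cleanly by the fact that the cokernel is Gorenstein projective.
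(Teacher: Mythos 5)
Your argument is correct, but it takes a genuinely different route from the paper. The paper proves the lemma by a direct diagram chase: it lifts a map $\Omega(M)\to\Omega(N)$ through the projective presentations using $\mathrm{Ext}^1_{RG}(M,Q)=0$ to get surjectivity of the natural map $\underline{\mathrm{Hom}}_{RG}(M,N)\to\underline{\mathrm{Hom}}_{RG}(\Omega(M),\Omega(N))$, and then shows injectivity by an explicit factorization argument ($g-\gamma s$ factoring through $\beta$, etc.). You instead package everything into the Frobenius category $\mathcal{C}of(RG)$ of Lemma \ref{lem:C} and quote Happel's theorem that $\Omega$ is an auto-equivalence of the stable category of a Frobenius category. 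Your two preliminary verifications are sound: closure under ambient syzygies follows from the two-out-of-three clause of Proposition \ref{prop:cof} since projectives are cofibrant, and closure under cosyzygies follows because the cosyzygy of a Gorenstein projective module is Gorenstein projective, so the $\mathrm{Ext}^1$-vanishing hypothesis in the last clause of Proposition \ref{prop:cof} is available; your observation that ``factors through a projective'' means the same thing in $\mathrm{Mod}(RG)$ and in $\mathcal{C}of(RG)$ is also correct, since projective $RG$-modules are themselves cofibrant and are exactly the projective-injective objects. What your approach buys is conceptual economy and the naturality of the isomorphism for free (which matters for the direct limit in Theorem \ref{thm:stable}); what it costs is reliance on Lemma \ref{lem:C}, whose proof in the paper is itself rather terse, and on the general machinery of \cite{Hap88}, whereas the paper's computation is elementary and self-contained (and its injectivity half is reused almost verbatim in the proof of Theorem \ref{thm:stable} for the comparison with $\Omega^r$). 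In effect, your closure computations are the missing details of Lemma \ref{lem:C}, so the total work is comparable.
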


\begin{proof}
There exist exact sequences $0\rightarrow \Omega(M)\rightarrow P\rightarrow M\rightarrow 0$ and  $0\rightarrow \Omega(N)\rightarrow Q\rightarrow N\rightarrow 0$, where $P$ and $Q$ are projective $RG$-modules. Since ${\rm Ext}_{RG}^1(M, Q) = 0$, any map from $\Omega(M)$ to $Q$ can be extended to a map $P\rightarrow Q$, and then we have the following commutative diagram
$$\xymatrix@C=20pt@R=20pt{
 0\ar[r] & \Omega(M)\ar[d]_{\Omega(f)}\ar[r] & P\ar[r]\ar[d]^{g} &M\ar[r]\ar[d]^{f} & 0 \\
0\ar[r] & \Omega(N) \ar[r] &Q \ar[r] &N\ar[r] & 0
  }$$
Hence, the natural homomorphism $\underline{{\rm Hom}}_{RG}(M, N) \rightarrow \underline{{\rm Hom}}_{RG}(\Omega(M), \Omega(N))$ is surjective.

Now assume that $0 = \Omega(f)\in \underline{{\rm Hom}}_{RG}(\Omega(M), \Omega(N))$. Since ${\rm Ext}_{RG}^1(M, -)$ vanishes for any projective $RG$-module, if $\Omega(f):\Omega(M)\rightarrow \Omega(N)$ factors through a projective module, then it might factor through $P$. That is, for $\alpha: \Omega(M)\rightarrow P$, there exists a map $s: P\rightarrow \Omega(N)$ such that $\Omega(f) = s\alpha$. Denote by $\gamma: \Omega(N)\rightarrow Q$ and $\delta: Q\rightarrow N$. It is standard to show that $g - \gamma s: P\rightarrow Q$ factors through $\beta: P\rightarrow M$, that is, there exists a map $t: M\rightarrow Q$ such that $g - \gamma s = t\beta$. Since
$f\beta =  \delta g = \delta t \beta$ and $\beta$ is epic, it yields that $f = \delta t$. Hence, the natural homomorphism  $\underline{{\rm Hom}}_{RG}(M, N) \rightarrow \underline{{\rm Hom}}_{RG}(\Omega(M), \Omega(N))$ is also injective.
\end{proof}

\begin{theorem}\label{thm:stable}
Let $G$ be a group and $R$ be a commutative ring of coefficients. Then ${\rm Ho}(\mathcal{F}ib)$ is equivalent to ${\rm StMod}(RG)$.
\end{theorem}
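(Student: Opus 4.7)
The plan is to reduce the equivalence ${\rm Ho}(\mathcal{F}ib)\simeq {\rm StMod}(RG)$ to an equivalence $\underline{\mathcal{C}of}(RG)\simeq {\rm StMod}(RG)$, using Corollary \ref{cor:equ1} as a black box. I will construct a candidate functor $F\colon \underline{\mathcal{C}of}(RG)\to {\rm StMod}(RG)$ which is the identity on objects (cofibrant modules are automatically fibrant, since $M\otimes_R B(G,R)$ projective forces ${\rm pd}_{RG}M\otimes_R B(G,R)=0<\infty$) and, on morphisms, sends a class $[f]\in\underline{{\rm Hom}}_{RG}(M,N)$ to its image in the directed colimit defining $\widehat{{\rm Ext}}_{RG}^0(M,N)$. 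I will then prove that $F$ is both fully faithful and essentially surjective.

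For fully faithfulness, the key tool is Lemma \ref{lem:hom}. Since $\mathcal{C}of(RG)$ is a Frobenius category (Lemma \ref{lem:C}), the syzygy operator $\Omega$ restricts to a well-defined endofunctor on $\underline{\mathcal{C}of}(RG)$; equivalently, if $M$ is cofibrant and $0\to \Omega(M)\to P\to M\to 0$ is exact with $P$ projective, then $\Omega(M)\in \mathcal{C}of(RG)$ by Proposition \ref{prop:cof}. Applying Lemma \ref{lem:hom} iteratively gives natural isomorphisms
\[
\underline{{\rm Hom}}_{RG}(M,N)\cong \underline{{\rm Hom}}_{RG}(\Omega^i(M),\Omega^i(N))
\]
for every $i\geq 0$ and all cofibrant $M,N$. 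Hence the directed system whose colimit defines ${\rm Hom}_{{\rm StMod}(RG)}(M,N)$ consists entirely of isomorphisms, so its colimit is canonically identified with $\underline{{\rm Hom}}_{RG}(M,N)$; this gives fully faithfulness of $F$.

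For essential surjectivity, I will use the completeness of the cotorsion pair $(\mathcal{C}of,\mathcal{W})$ established inside $\mathcal{F}ib(RG)$ in Theorem \ref{thm:model}. Given any fibrant $M$, choose an exact sequence $0\to Y\to X\to M\to 0$ in $\mathcal{F}ib$ with $X\in \mathcal{C}of$ and $Y\in \mathcal{W}$, i.e.\ ${\rm pd}_{RG}Y=n<\infty$. Applying the horseshoe lemma to this sequence, for each $i\geq 1$ there is an exact sequence
\[
0\longrightarrow \Omega^i(Y)\longrightarrow \Omega^i(X)\oplus P_i\longrightarrow \Omega^i(M)\longrightarrow 0
\]
for some projective $P_i$. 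Taking $i>n$ makes $\Omega^i(Y)$ projective, so the sequence splits and $\Omega^i(X)\cong \Omega^i(M)$ in the stable module category. This isomorphism lies in the colimit defining ${\rm Hom}_{{\rm StMod}(RG)}(X,M)$ and provides a two-sided inverse there, so $X$ and $M$ are isomorphic in ${\rm StMod}(RG)$.

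The main obstacle I expect is keeping the bookkeeping around the horseshoe/syzygy step clean, since $\Omega$ is only well-defined up to projective summands and one must verify that the isomorphism $\Omega^i(X)\cong \Omega^i(M)$ actually descends to a class in the colimit $\widehat{{\rm Ext}}_{RG}^0(X,M)$ and admits a two-sided inverse there; once this verification is made, combining fully faithfulness, essential surjectivity of $F$, and Corollary \ref{cor:equ1} yields the desired chain of equivalences ${\rm Ho}(\mathcal{F}ib)\simeq \underline{\mathcal{C}of}(RG)\simeq {\rm StMod}(RG)$.
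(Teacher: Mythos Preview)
Your proposal is correct and uses the same core ingredients as the paper --- Lemma~\ref{lem:hom}, the complete cotorsion pair $(\mathcal{C}of,\mathcal{W})$ from Theorem~\ref{thm:model}, and the observation that high syzygies of a fibrant module are cofibrant --- but the two arguments are organized differently. You factor through Corollary~\ref{cor:equ1} and then prove $\underline{\mathcal{C}of}(RG)\simeq{\rm StMod}(RG)$ by separately checking full faithfulness and essential surjectivity; the paper instead exploits that ${\rm Ho}(\mathcal{F}ib)$ and ${\rm StMod}(RG)$ already have the \emph{same} objects, so it constructs the comparison functor ${\rm Ho}(\mathcal{F}ib)\to{\rm StMod}(RG)$ directly and only needs to verify full faithfulness. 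Concretely, for arbitrary fibrant $M,N$ the paper passes to cofibrant replacements $Q(M),Q(N)$, identifies ${\rm Hom}_{{\rm Ho}(\mathcal{F}ib)}(M,N)\cong\underline{{\rm Hom}}_{RG}(Q(M),Q(N))$, and then --- via a comparison of two finite resolutions of $M$ (one ending in $Q(M)$, the other in high syzygies) --- shows $\underline{{\rm Hom}}_{RG}(Q(M),Q(N))\cong\underline{{\rm Hom}}_{RG}(\Omega^r(M),\Omega^r(N))$ for $r\gg 0$. Your horseshoe/essential-surjectivity step is the analogue of this last identification. The paper's route is slightly more economical (no separate essential-surjectivity argument), while your route has the virtue of making the role of $\underline{\mathcal{C}of}(RG)$ as an intermediary completely explicit.

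One small point to tighten in your splitting step: taking $i>n={\rm pd}_{RG}Y$ makes $\Omega^i(Y)$ projective, but the short exact sequence $0\to\Omega^i(Y)\to\Omega^i(X)\oplus P_i\to\Omega^i(M)\to 0$ splits only once ${\rm Ext}^1_{RG}(\Omega^i(M),\text{proj})=0$, which in turn needs $\Omega^i(M)$ cofibrant. Since $M$ is fibrant this holds for $i\geq{\rm pd}_{RG}(M\otimes_R B(G,R))$; just take $i$ large enough to cover both thresholds.
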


\begin{proof}
First, we note that objects of ${\rm Ho}(\mathcal{F}ib)$ and ${\rm StMod}(RG)$ coincide. It suffices to prove that the natural functor from ${\rm Ho}(\mathcal{F}ib)$ to
${\rm StMod}(RG)$ is fully faithful.

Let $M$ and $N$ be any fibrant $RG$-modules. By the completeness of the cotorsion pair $(\mathcal{C}of, \mathcal{W})$, there exists an exact sequence $0\rightarrow K_M\rightarrow Q(M)\rightarrow M\rightarrow 0$,
where $Q(M)$ is cofibrant and $K_M\in \mathcal{W}$. Hence, the cofibrant approximation $Q(M)\rightarrow M$ is also a trivial fibration, and we refer to it (or simply, to $Q(M)$) as being a cofibrant replacement of $M$. Then, we have the following isomorphisms
$${\rm Hom}_{{\rm Ho}(\mathcal{F}ib)}(M, N)\cong \underline{{\rm Hom}}_{RG}(Q(M), Q(N)) \cong {\rm Hom}_{{\rm StMod}(RG)}(Q(M), Q(N)),$$
where the first one follows by \cite[Theorem 1.2.10(ii)]{Hov99} and Proposition \ref{prop:WEqu}, and the second one holds by Lemma \ref{lem:hom}.

By basic properties of cofibrant modules (see Proposition \ref{prop:cof}), for fibrant $RG$-modules $M$ and $N$, there exists an integer $r >> 0$, such that both $\Omega^r(M)$ and $\Omega^r(N)$ are cofibrant modules, and moreover, the projective dimension of $K_M$ and $K_N$ are not more than $r - 1$. For $M$, we have the following commutative diagram with exact rows
$$\xymatrix@C=20pt@R=20pt{
0\ar[r] &\Omega^r(M) \ar[r]\ar[d] &P_{r-1}\ar[r]\ar[d] &\cdots \ar[r] &P_1\ar[r]\ar[d] &P_0\ar[r]\ar[d]
&M \ar[r] \ar@{=}[d] &0\\
0 \ar[r] &P'_{r}\ar[r]  &P'_{r-1}\ar[r] & \cdots \ar[r] &P'_{1}\ar[r] &Q(M) \ar[r] &M \ar[r] &0 }$$
where $P_i$ and $P'_i$ are all projective $RG$-modules; this yields an exact sequence
$$0\longrightarrow\Omega^r(M)\longrightarrow P_{r-1}\oplus P'_{r}\longrightarrow\cdots \longrightarrow
P_0\oplus P'_1 \longrightarrow Q(M)\longrightarrow 0.$$
Similarly, we obtain such an exact sequence for $Q(N)$. Moreover, since $P_0\oplus P'_1$, $\cdots$, $P_{r-1}\oplus P'_{r}$ are projective modules, we get the following commutative diagram:
$$\xymatrix@C=20pt@R=20pt{
 0\ar[r] & \Omega^r(M)\ar[d]_{\Omega^r(f)}\ar[r] & P_{r-1}\oplus P'_{r}\ar[r]\ar[d] &\cdots \ar[r] &P_0\oplus P'_1 \ar[r]\ar[d] & Q(M)\ar[r]\ar[d]^{f} & 0 \\
 0\ar[r] & \Omega^r(N)\ar[r] & Q_{r-1}\oplus Q'_{r}\ar[r] &\cdots \ar[r] &Q_0\oplus Q'_1 \ar[r] & Q(N)\ar[r] & 0
  }$$
Analogous to Lemma \ref{lem:hom}, we can prove that there is an isomorphism
$$\underline{{\rm Hom}}_{RG}(Q(M), Q(N)) \cong \underline{{\rm Hom}}_{RG}(\Omega^r(M), \Omega^r(N)).$$
Moreover, it follows from Lemma \ref{lem:hom} that for all $j > 0$, we have isomorphisms
$$\underline{{\rm Hom}}_{RG}(\Omega^r(M), \Omega^r(N)) \cong \underline{{\rm Hom}}_{RG}(\Omega^{r+j}(M), \Omega^{r+j}(N)),$$
and consequently,
$${\rm Hom}_{{\rm StMod}(RG)}(M, N) = \mathop{\underrightarrow{\mathrm{lim}}}\limits_i\underline{{\rm Hom}}_{RG}(\Omega^i(M), \Omega^i(N)) = \underline{{\rm Hom}}_{RG}(\Omega^r(M), \Omega^r(N)).$$
Hence, we get the desired isomorphism ${\rm Hom}_{{\rm Ho}(\mathcal{F}ib)}(M, N)\cong {\rm Hom}_{{\rm StMod}(RG)}(M, N)$. We are done with the proof.
\end{proof}

It is well known that the category of Gorenstein projective modules $\mathcal{GP}(A)$ over any ring $A$ is a Frobenius category, then the corresponding stable category $\underline{\mathcal{GP}}(A)$ is a triangulated category.
We denote by ${\rm D}^b(RG)$ the bounded derived category of $RG$, and let ${\rm K}^b(RG)$ and ${\rm K}^b(RG\text{-}{\rm Proj})$ stand for the homotopy categories of bounded $RG$-complexes and bounded complexes of projective $RG$-modules, respectively. Note that the composite of natural functors $${\rm K}^b(RG\text{-}{\rm Proj})\rightarrow {\rm K}^b(RG)\rightarrow {\rm D}^b(RG)$$ is fully faithful, and this allows us to view ${\rm K}^b(RG\text{-}{\rm Proj})$
as a (thick) triangulated subcategory of ${\rm D}^b(RG)$. The singularity category of $RG$, denoted by ${\rm D}_{sg}(RG)$, is defined to be the Verdier quotient
${\rm D}^b(RG)/ {\rm K}^b(RG\text{-}{\rm Proj})$; see \cite{Buc87, Or04} or \cite[Section 6]{Bel00}. Note that ${\rm D}_{sg}(RG)$ vanishes if and only if every module has finite projective dimension, that is,  $RG$ is a ring of finite global dimension.

Consider the following composition of functors
$$F: \mathcal{GP}(RG)\hookrightarrow {\rm Mod}(RG)\stackrel{\iota}\longrightarrow {\rm D}^b(RG) \stackrel{\pi}\longrightarrow {\rm D}_{sg}(RG),$$
where the first functor is the inclusion, the second functor $\iota$ is the full embedding which sends every $RG$-module to the corresponding stalk complex concentrated in degree zero, and the last one $\pi$ is the standard quotient functor. Note that $F$ induces a unique functor from the stable category $\underline{\mathcal{GP}}(RG)$ to ${\rm D}_{sg}(RG)$.

Let ${\rm K}_{ac}(RG\text{-}{\rm Proj})$ and ${\rm K}_{tac}(RG\text{-}{\rm Proj})$ denote the homotopy categories of acyclic complexes and totally acyclic complexes of projective $RG$-modules, respectively. There is a functor
$\Omega: {\rm K}_{tac}(RG\text{-}{\rm Proj})\rightarrow \underline{\mathcal{GP}}(RG)$ given by taking the kernel of the boundary map in degree zero.

We summarize some equivalences of triangulated categories as follows, which generalize \cite[Theorem 3.10]{MS19} where the equivalences
$${\rm StMod}(RG) \simeq \underline{\mathcal{GP}}(RG) \simeq {\rm D}_{sg}(RG) \simeq {\rm K}_{tac}(RG\text{-}{\rm Proj}),$$
were proved in the case that $R$ is a commutative noetherian ring with finite global dimension and $G$ is a group of type $\Phi_R$. The noetherian assumption therein can be removed.

\begin{corollary}\label{cor:triequ}
Let $R$ be a commutative ring with finite global dimension. If $G$ is either a group of type $\Phi_R$ or an ${\rm LH}\mathfrak{F}$-group of type $FP_\infty$, then the following categories are equivalent:
$$\begin{aligned} {\rm Ho}(\mathcal{F}ib) &\simeq  {\rm StMod}(RG) \simeq \underline{\mathcal{C}of}(RG) = \underline{\mathcal{GP}}(RG) \\
 &\simeq {\rm D}_{sg}(RG) \simeq {\rm K}_{tac}(RG\text{-}{\rm Proj}) = {\rm K}_{ac}(RG\text{-}{\rm Proj}).
\end{aligned}$$
\end{corollary}

\begin{proof}
The equivalences ${\rm Ho}(\mathcal{F}ib) \simeq {\rm StMod}(RG) \simeq \underline{\mathcal{C}of}(RG)$ hold for any group $G$ and any commutative ring $R$. Under the assumptions, we infer from Corollary \ref{cor:GP=Cof} that $\underline{\mathcal{C}of}(RG)$ and $\underline{\mathcal{GP}}(RG)$ coincide. Furthermore, it follows from Theorem \ref{thm:fGcd} and Corollary \ref{cor:Gcd=pdB2} that $RG$ is a Gorenstein regular ring. In this case, every acyclic complex of projective $RG$-modules is totally acyclic. Hence, ${\rm K}_{ac}(RG\text{-}{\rm Proj})$ agrees with ${\rm K}_{tac}(RG\text{-}{\rm Proj})$. The equivalence $\Omega: {\rm K}_{tac}(RG\text{-}{\rm Proj})\rightarrow \underline{\mathcal{GP}}(RG)$ follows from \cite[Theorem 4.16]{Bel00}. We infer from \cite[Theorem 6.9]{Bel00} or \cite[Theorem 3.3]{Chen11} that the natural functor $F: \underline{\mathcal{GP}}(RG)\rightarrow {\rm D}_{sg}(RG)$ is an equivalence.
\end{proof}

\begin{remark}
The singularity category was first studied by Buchweitz in his unpublished note \cite{Buc87} under the name of ``stable derived category''. In order to distinguish with the singularity category for finitely generated modules, the category is sometimes called a {\rm big singularity category} if the modules are not necessarily finitely generated; see e.g. \cite[pp. 205]{Chen11}.
\end{remark}

\vskip 10pt

\noindent {\bf Acknowledgements.}\quad
The author is grateful to Professor I. Emmanouil for sharing the preprint \cite{ET} and for helpful comments and suggestions
for an early version of the manuscript, and to Professors X.-W. Chen, D.-S. Li and W.-H. Qian for their helpful suggestions. 
The author is also grateful to the referee for helpful suggestions which result in a significant improvement of the paper. This work was supported by the National Natural Science Foundation of China (No. 11871125).

\vskip 10pt

{\footnotesize \noindent Wei Ren\\
 School of Mathematical Sciences, Chongqing Normal University, Chongqing 401331, PR China\\
 }


\begin{thebibliography}{9999}


\bibitem{ABS09} {\sc Asadollahi J., Bahlekeh A., Salarian S.} On the hierarchy of cohomological dimensions of groups.  \emph{J. Pure Appl. Algebra} {\bf 213}(9) (2009) 1795-1803.

\bibitem{AB69} {\sc Auslander M., Bridger M.} \emph{Stable module category}. Mem. Amer. Math. Soc. 94., 1969.

\bibitem{BDT09} {\sc Bahlekeh A., Dembegioti F., Talelli O.} Gorenstein dimension and proper actions. \emph{Bull. London. Math. Soc.}  {\bf 41} (2009) 859-871.

\bibitem{Bel00} {\sc Beligiannis A.} The homological theory of contravariantly finite subcategories: Auslander-Buchweitz contexts, Gorenstein categories and (co)stabilization. \emph{Commun. Algebra} {\bf 28} (2000) 4547-4596.

\bibitem{BM10} {\sc Bennis D., Mahdou N.} Global Gorenstein dimensions. \emph{Proc. Amer. Math. Soc.} {\bf 138} (2010) 461-465.

\bibitem{Ben97} {\sc Benson D.} Complexity and varieties for infinite groups I, II. \emph{J. Algebra} {\bf 193} (1997) 260-287, 288-317.

\bibitem{Bis21+} {\sc Biswas R.} Benson's cofibrants, Gorenstein projectives and a related conjecture. \emph{Proc. Edinburgh Math. Soc.} {\bf 64}(4) (2021) 779-799.

\bibitem{Bis21} {\sc Biswas R.} On some cohomological invariants for large families of infinite groups. \emph{New York J. Math.} {\bf 27} (2021) 818-839.

\bibitem{Bro82} {\sc Brown K.S.} \emph{Cohomology of Groups}. Graduate Texts in Mathematics 87, Springer, Berlin-Heidelberg-New York, 1982.

\bibitem{Buc87} {\sc R.O. Buchweitz}, Maximal Cohen-Macaulay Modules and Tate Cohomology over Gorenstein Rings. Unpublished manuscript, 155pp, 1987. Available at https://hdl.handle.net/1807/16682.

\bibitem{Bun10} {\sc B\"{u}hler T.} Exact Categories. \emph{Expo. Math.} {\bf 28}(1) (2010) 1-69.

\bibitem{Chen11} {\sc Chen X.-W.} Relative singularity categories and Gorenstein-projective modules. \emph{Math. Nachr.} {\bf 284} No. 2-3 (2011) 199-212.

\bibitem{CR21} {\sc Chen X.-W., Ren W.} Frobenius functors and Gorenstein homological properties. \emph{J. Algebra} {\bf 610} (2022) 18-37.

\bibitem{CZ07} {\sc Chen X.-W., Zhang P.} Quotient triangulated categories. \emph{Manuscripta Math.}  {\bf 123} (2007) 167-183.

\bibitem{Chr00} {\sc Christensen L.W.} \emph{Gorenstein Dimensions}. Lecture Notes in Math. Vol. 1747, Berlin: Springer-Verlag, 2000.

\bibitem{CK96} {\sc J. Cornick and P.H. Kropholler}, Homological finiteness conditions for modules over strongly group-graded rings. \emph{Math. Proc. Camb. Phil. Soc.} {\bf 120}(1) (1996) 43-54.

\bibitem{CK97} {\sc Cornick J., Kropholler P.H.}. On complete resolutions. \emph{Topology Appl.} {\bf 78}(3) (1997) 235-250.

\bibitem{DT08} {\sc Dembegioti F., Talelli O.} An integral homological characterization of finite groups. \emph{J. Algebra} {\bf 319} (2008) 267-271.

\bibitem{DT10} {\sc Dembegioti F., Talelli O.} A note on complete resolutions. \emph{Proc. Amer. Math. Soc.} {\bf 138}(11) (2010) 3815-3820.

\bibitem{Emm12} {\sc Emmanouil I.} On the finiteness of Gorenstein homological dimensions. \emph{J. Algebra} {\bf 372} (2012) 376-396.

\bibitem{ET14} {\sc Emmanouil I., Talelli O.} Finiteness criteria in Gorenstein homological algebra. \emph{Trans. Amer. Math. Soc.} {\bf 366} (2014) 6429-6351.

\bibitem{ET18} {\sc Emmanouil I., Talelli O.} Gorenstein dimension and group cohomology with group ring coefficients. \emph{J. London Math. Soc.} {\bf 97}(2) (2018) 306-324.

\bibitem{ET22} {\sc Emmanouil I., Talelli O.} On the Gorenstein cohomological dimension of group extensions. \emph{J. Algebra} {\bf 605} (2022) 403-428.

\bibitem{ET} {\sc Emmanouil I., Talelli O.} Characteristic modules and  Gorenstein (co-)homological dimension of groups. 
\emph{J. Pure Appl. Algebra} {\bf 229} (2025) 107830.

\bibitem{EEGR} {\sc Enochs E.E., Estrada S., Garc\'{\i}a-Rozas J.R.} Gorenstein categories and Tate cohomology on projective schemes. \emph{Math. Nachr.} {\bf 281}(4) (2008) 525-540.

\bibitem{EJ00} {\sc Enochs E.E., Jenda O.M.G.} \emph{ Relative Homological Algebra}. De Gruyter Expositions in Mathematics no. 30, New York: Walter De Gruyter, 2000.

\bibitem{GG87} {\sc Gedrich T.V., Gruenberg K.W.} Complete cohomological functors of groups. \emph{Topology Appl.} {\bf 25} (1987) 203-223.

\bibitem{Gil11} {\sc Gillespie J.} Model structures on exact categories. \emph{J. Pure Appl. Algebra} \textbf{215}(12) (2011) 2892-2902.

\bibitem{Hol04} {\sc Holm H.} Gorenstein homological dimensions. \emph{J. Pure Appl. Algebra} {\bf 189} (2004) 167-193.

\bibitem{Hov99}  {\sc Hovey M.} \emph{Model Categories}. Mathematical Surveys and Monographs vol. 63, American Mathematical Society, 1999.

\bibitem{Hov02} {\sc Hovey M.} Cotorsion pairs, model category structures and representation theory. \emph{Math. Z.} \textbf{241} (2002) 553-592.

\bibitem{Kro93} {\sc Kropholler P.H.} On groups of type ${\rm FP}_{\infty}$. \emph{ J. Pure Appl. Algebra} {\bf 90} (1993) 55-67.

\bibitem{MS19} {\sc Mazza N., Symonds P.} The stable category and invertible modules for infinite groups. \emph{Adv. Math.} {\bf 358} (2019) 106853, 26 pp.

\bibitem{Or04} {\sc Orlov D.} {\em Triangulated categories of singularities and D-branes in Landau-Ginzburg models}. Trudy Steklov Math. Institute {\bf 204} (2004) 240-262.

\bibitem{Qui67} {\sc Quillen D.G.} \emph{Homotopical Algebra}. Lecture Notes in Mathematics no. 43, Springer-Verlag, 1967.

\bibitem{Sta68} {\sc Stallings J.} On torsion-free groups with infinitely many ends. \emph{Ann. Math.} {\bf 88} (1968) 312-334.

\bibitem{Swan69} {\sc Swan R.G.} Groups of cohomological dimension one. \emph{J. Algebra} {\bf 12} (1969) 585-601.

\bibitem{Tal07} {\sc Talelli O.} On groups of type $\Phi$. \emph{Arch. Math.} {\bf 89}(1) (2007) 24-32.

\bibitem{Tal14} {\sc Talelli O.} On the Gorenstein and cohomological dimension of groups. \emph{Proc. Amer. Math. Soc.} {\bf 142}(4) (2014) 1175-1180.

\bibitem{Tal17} {\sc Talelli O.} On characteristic modules of groups. \emph{Geometric and Cohomological Group Theory}, P.H.
Kropholler et al. (Eds.), London Math. Soc. Lecture Note Ser. 444, Cambridge Univ. Press (2017) 172-181.

\end{thebibliography}
\end{document}